\newtheorem{thm}{Theorem}[section]
\newtheorem{lem}[thm]{Lemma}
\newtheorem{prop}[thm]{Proposition}
\newtheorem{cor}[thm]{Corollary}
\newtheorem{exs}[thm]{Examples}
\newtheorem{ex}[thm]{Example}
\theoremstyle{definition}
\newtheorem{defn}[thm]{Definition}
\theoremstyle{remark}
\numberwithin{equation}{section}
\newcommand{\sbq}{\subseteq}
\newcommand{\spq}{\supseteq}
\newcommand{\vide}{\emptyset} 
\newcommand{\tbf}{\textbf}
\newcommand{\mbf}{\mathbf}
\newcommand{\inv}{^{-1}}
\newcommand{\C}{\text{C}}
\newcommand{\Rtwo}{\R\times\R}  
\newcommand{\betatwo}{\beta\R \times \beta\R} 
\newcommand{\betaNtwo}{\beta\N \times \beta\N}  
\DeclareMathOperator{\ann}{ann}
\DeclareMathOperator{\coz}{coz}
\DeclareMathOperator{\z}{z}
\newcommand{\R}{\mathbf R}
\newcommand{\N}{\mathbf N}
\newcommand{\Q}{\mathbf Q}
\newcommand{\Z}{\mathbf Z}
\newcommand{\cl}{\text{cl\,}}
\newcommand{\varep}{\varepsilon}
\newcommand{\res}{\raisebox{-.5ex}{$|$}}
\newcommand{\cx}{\text{C}(X)}
\newcommand{\rr}{\tbf{rr}}
\newcommand{\bd}{\text{\tbf{Fr\,}}}
\newcommand{\inte}{\ensuremath{\text{\tbf{int}}}}
\newcommand{\rro}{\le_{\text{rr}}}
\newcommand{\wrr}{\wedge_{\text{rr}}}
\newcommand{\T}{\text{\tbf{T}}}
\begin{document}
 \title[Tychonoff spaces and a ring theoretic order on $\cx$]{
Tychonoff spaces and a ring theoretic order on $\cx$}

 \author{W.D. Burgess}
\address{Department of Mathematics and Statistics\\ University of Ottawa, Ottawa, Canada, K1N 6N5}

\email{wburgess@uottawa.ca}
\thanks{The authors thank the referee for helpful observations as well as for suggestions about the presentation.  \\\indent DOI 10.1016/j.topol.2020.107250}
\subjclass[2010]{54F65\  06F25\ 13F99 }
\keywords{reduced ring order, locally connected, basically disconnected, semi-lattice}

\author{R. Raphael}
\address{Department of Mathematics and Statistics\\ Concordia University, Montr\'eal, Canada, H4B 1R6}

\email{r.raphael@concordia.ca}

\begin{abstract} The reduced ring order (\rr-order) is a natural partial order on a reduced ring $R$ given by $r\rro s$ if $r^2=rs$. It can be studied algebraically or topologically in rings of the form $\cx$.  The focus here is on those reduced rings in which each pair of elements has an infimum in the \rr-order, and what this implies for $X$. A space $X$ is called \rr-good if $\cx$ has this property. Surprisingly both locally connected and basically disconnected spaces share this property. The \rr-good property is studied under various topological conditions including its behaviour under Cartesian products.  The product of two \rr-good spaces can fail to be \rr-good (e.g., $\beta \R\times \beta \R$), however, the product of a $P$-space and an \rr-good weakly Lindel\"of space is always \rr-good.  $P$-spaces, $F$-spaces and $U$-spaces play a role, as do Glicksberg's theorem and work by Comfort, Hindman and Negrepontis.   
 \end{abstract}
\maketitle

\noindent\tbf{Introduction.} 

In a reduced ring, a ring with no non-zero nilpotent elements, such as $\cx$, there is a partial order that generalizes the natural partial order on a boolean ring. The order relation is defined as $r\rro s$ if $r^2=rs$. The study of this order, here called the \emph{\rr-order} for the \emph{reduced ring order}, goes back at least to 1958 in \cite{S}.  Since then it has been studied at various times (see \cite{Ch} and \cite{B}, for example),  but most recently in \cite{BR1} and \cite{BR2}.  In these papers some of the most interesting examples and results are about rings of the form $\cx$. 

It is rare for a pair of elements in a reduced ring $R$ to have a supremum in the \rr-order and the most natural generalization of the boolean ring case is where the ring has, for every pair of elements $r,s\in R$, an infimum in the \rr-order, noted $r\wrr s$, i.e., when $R$ is a lower semi-lattice in the order. Such rings are called \emph{\rr-good}. A space $X$ is called \emph{\rr-good} if the ring $\cx$ is \rr-good. The theme of this paper is the study of spaces that are \rr-good and those that are not.

In the sequel, all topological spaces will be \emph{assumed to be Tychonoff spaces.}

Not all spaces are \rr-good but those that are form a surprisingly diverse family that includes locally connected spaces and those that are basically disconnected.  To find a topological characterization of \rr-good spaces would seem an unrealistic task but much can be said about them. There are connected spaces that are not \rr-good (see Theorem~\ref{hager}, below) and even connected, compact metric spaces that are not \rr-good (\cite[Theorem~3.5(2)]{BR1}).  

The paper is divided into five sections. The first gives some basic results and examples. The second deals with when a product of two spaces is \rr-good. If $X\times Y$ is \rr-good it is easy to see that $X$ and $Y$ are also \rr-good.  The converse, false in general, turns out to be a rich subject. 

If the real line $\R$ is partitioned into two complementary dense subspaces, neither can be \rr-good. The third section shows that $\R^2$ is quite different. Complementary dense subspaces of  the plane are found, one of which is \rr-good.

 Section~4 examines basically disconnected and $U$-spaces with an emphasis on separation properties and their consequences for  $\cx$.
 
 Section~5 looks at a sufficient condition, called the B-property (for boundary property), for a space to be \rr-good. Basically disconnected spaces that are not discrete do not have the B-property. Here it is shown how to find connected \rr-good spaces without the B-property.
 \\[-.5ex]

\setcounter{section}{1}
\setcounter{thm}{0}
\noindent\tbf{1. The definition of \rr-good spaces: basic properties and examples.}

To recall: a ring $\cx$ is partially ordered by the relation $f\rro g$ if $f^2=fg$. When $h\rro f$ and $h\rro g$ this is abbreviated to $h\rro f,g$. The following facts are obvious but are basic tools underlying many of the results used below. 

\begin{lem} \label{fundlem}  In a ring $\cx$, (1) if $f\rro g$ then $f$ and $g$ coincide on $\cl(\coz f)$; 

(2) if $h\rro f,g$ then $\coz h\sbq \z(f-g) \cap \coz f$.  \end{lem}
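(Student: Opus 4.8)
The plan is to translate the algebraic relation $f\rro g$, namely $f^2=fg$, into a pointwise statement and then bring in continuity. First I would rewrite $f^2=fg$ as $f(f-g)=0$, so that at every point $x\in X$ one has $f(x)=0$ or $f(x)=g(x)$. Consequently $f$ and $g$ agree at every point of $\coz f$, i.e. $\coz f\sbq \z(f-g)$. Since $\z(f-g)$ is the zero set of a continuous function and hence closed, it contains the closure of $\coz f$; thus $f$ and $g$ coincide on $\cl(\coz f)$, which is exactly (1).

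For (2) I would simply invoke (1) twice. From $h\rro f$ and $h\rro g$, part (1) gives that $h$ agrees with $f$, and separately with $g$, on all of $\cl(\coz h)$, and in particular on $\coz h$ itself. Hence on $\coz h$ we have $f=h=g$, which yields $\coz h\sbq \z(f-g)$. Moreover, on $\coz h$ the value $h(x)$ is nonzero and equals $f(x)$, so $f(x)\neq 0$ there; this gives $\coz h\sbq \coz f$. Intersecting the two inclusions produces the claimed $\coz h\sbq \z(f-g)\cap\coz f$.

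There is no genuine obstacle here: both parts are elementary once the order relation is unwound pointwise. The only step requiring a topological rather than a purely algebraic input is the passage from $\coz f$ to its closure in (1), which rests on the fact that the agreement set $\z(f-g)$ of two continuous real-valued functions is closed. Everything else is bookkeeping, and (2) is a direct corollary of (1).
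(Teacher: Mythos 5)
Your proof is correct and is precisely the argument the paper leaves implicit (it states the lemma as ``obvious'' with no written proof): unwind $f^2=fg$ pointwise using that $\R$ has no zero divisors, pass to the closure via the closedness of $\z(f-g)$, and deduce (2) by applying (1) twice to $h$. Nothing to add.
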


It is clear that a free union of \rr-good spaces is \rr-good since a product of \rr-good rings is \rr-good. The following proposition quotes results that show how \rr-good spaces can be found from a given \rr-good space.

\begin{prop} \label{recall}  (1) (\cite[Proposition 3.10]{BR1}) A cozero set in an \rr-good space is \rr-good. 

(2) (\cite[Proposition 3.9]{BR1}) The ring $\cx$ is \rr-good if and only if $\C^*(X)$ is \rr-good, i.e.,  a space $X$ is \rr-good if and only if $\beta X$ is \rr-good. \end{prop}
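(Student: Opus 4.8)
The two statements are really facts about the poset $(\cx,\rro)$, and both rest on two elementary observations I would isolate first. \emph{Observation A:} if $u\in\cx$ is a positive unit (that is, $u(x)>0$ for every $x$), then multiplication by $u$ is an order-automorphism of $(\cx,\rro)$. Indeed $f\rro g$ holds exactly when at each point $f(x)\in\{0,g(x)\}$, a condition visibly preserved and reflected upon multiplying both sides by the strictly positive value $u(x)$, and the inverse map is multiplication by $u\inv$. Such a map carries infima to infima, so $(uf)\wrr(ug)=u(f\wrr g)$ whenever either side exists. \emph{Observation B:} if $f\rro g$ and $g$ is bounded then $f$ is bounded, with $|f|\le\|g\|_\infty$, again because $f(x)\in\{0,g(x)\}$ pointwise. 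Consequently, for bounded $f,g$ every $\rro$-lower bound is automatically bounded, so an infimum of $f,g$ formed in $\cxs$ is simultaneously their infimum in $\cx$.

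For part (2) I would recall that $\cxs\cong\C(\beta X)$ as rings, hence as $\rro$-posets (the order is defined algebraically), so that ``$\beta X$ is \rr-good'' means exactly ``$\cxs$ is \rr-good''; it then suffices to prove $\cx$ is \rr-good iff $\cxs$ is. The forward implication is immediate: for bounded $f,g$ the infimum $h=f\wrr g$ formed in $\cx$ satisfies $h\rro f$, so by Observation B it is bounded and lies in $\cxs$, and it remains the infimum there since the $\cxs$-lower bounds form a subset of the $\cx$-lower bounds. For the converse, given arbitrary $f,g\in\cx$ put $u=1/(1+f^2+g^2)$, a positive unit in $\cxs$ with $uf,ug\in\cxs$. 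As $\cxs$ is \rr-good, $(uf)\wrr(ug)$ exists in $\cxs$; by Observation B it is also their infimum in $\cx$; and Observation A then returns $f\wrr g=u\inv\big((uf)\wrr(ug)\big)$ in $\cx$.

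For part (1) let $U=\coz f_0$ be a cozero set of an \rr-good space $X$; replacing $f_0$ by $f_0^2$ I may assume $f_0\ge 0$ with $f_0>0$ precisely on $U$, so $v=f_0\res_U$ is a positive unit of $\C(U)$. Using Observation A inside $\C(U)$ exactly as in part (2), I reduce to producing infima of \emph{bounded} $g_1,g_2\in\C^*(U)$. For such functions the products $f_0g_i$, extended by $0$ off $U$, are continuous on all of $X$: near a boundary point of $U$ one has $f_0\to 0$ while $g_i$ stays bounded. Writing $G_i$ for these extensions, \rr-goodness of $X$ gives $H=G_1\wrr G_2$ in $\cx$, and I claim $H\res_U$ is the infimum of $G_1\res_U=vg_1$ and $G_2\res_U=vg_2$ in $\C(U)$. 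Restriction sends $\rro$ to $\rro$, so $H\res_U$ is a lower bound; conversely, if $k\in\C(U)$ lies below $vg_1$ and $vg_2$, then $|k|\le|G_1|$ on $U$, so extending $k$ by $0$ yields a continuous $K\in\cx$ with $K\rro G_1,G_2$ (use Lemma~\ref{fundlem}), whence $K\rro H$ and $k=K\res_U\rro H\res_U$. Thus $(vg_1)\wrr(vg_2)=H\res_U$, and dividing by the unit $v$ (Observation A) gives $g_1\wrr g_2=v\inv H\res_U$ in $\C(U)$.

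The continuity checks for $G_i$ and $K$ and the pointwise unfolding of $\rro$ are routine and are just the content of Lemma~\ref{fundlem}. The one genuinely delicate point, and where I expect the real work to sit, is part (1)'s transfer of the infimum between $X$ and the subspace $U$: the extension-by-zero maps are continuous only because the functions are bounded while $f_0$ vanishes on the boundary, so the reduction to bounded functions via Observation A is not a mere convenience but the hinge that forces the restriction $H\res_U$ to agree with the infimum computed intrinsically in $\C(U)$. For unbounded $g_i$ the naive extension fails outright, and it is precisely the interplay of Observations A and B that circumvents this.
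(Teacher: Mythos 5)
Your proof is correct, but there is nothing in the paper to compare it with: the paper does not prove Proposition~\ref{recall} at all, it quotes both parts from \cite{BR1} (Propositions 3.9 and 3.10). You have therefore supplied a self-contained argument for results the paper leaves to a citation, and the argument holds up. The pointwise reading of $f\rro g$ (that $f(x)\in\{0,g(x)\}$ at every $x$) does make multiplication by a unit of $\cx$ an order-automorphism, and does make every $\rro$-lower bound of a bounded function bounded; together these correctly reduce \rr-goodness to the existence of infima of bounded pairs, which is exactly what transfers across the inclusion $\cxs\sbq\cx$ in part (2) and across the multiply-by-$f_0$-then-extend-by-zero maps in part (1). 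The continuity of $G_i$ and of $K$ at points of $\bd U$, and the verification that $H\res_U$ is the greatest lower bound of $vg_1,vg_2$ in $\C(U)$, are sound. Two slips, neither fatal. First, $u=1/(1+f^2+g^2)$ is a positive element of $\cxs$ but in general a unit only of $\cx$, since its inverse $1+f^2+g^2$ need not be bounded; your argument only ever invokes it as a unit of $\cx$ (Observation A is applied in $\cx$), so nothing breaks, but the phrase ``a positive unit in $\cxs$'' is wrong as stated. Second, the pointwise facts you credit to Lemma~\ref{fundlem} are really just the definition $f^2=fg$ read off at each point; the lemma as stated records consequences of this (agreement on $\cl(\coz f)$, etc.), not the characterization itself, and positivity of the unit in Observation A is superfluous since any unit cancels pointwise. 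Your closing diagnosis is accurate: the extension of $k$ by zero is continuous only because $|k|\le|G_1|$ with $G_1$ vanishing on $\bd U$, so the preliminary reduction to bounded functions is genuinely the hinge of part (1).
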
 

The two main classes of examples of \rr-good spaces are summarized here.

\begin{exs}\label{basicexs} (1)~\cite[Theorem~3.5(1)]{BR1} If $X$ is a locally connected space then $X$ is \rr-good.

(2) \cite[before Ex.\,3.2]{NR} If $X$ is basically disconnected then $X$ is \rr-good. \end{exs}

The second case will be expanded upon in Section~4. 

It is not true that a quotient space of an \rr-good space has to be \rr-good. 

\begin{ex} \label{N*} The space $\beta\N$ is \rr-good but its quotient space  $\N\cup \{\infty\}$ (the one-point compactification) is not.  \end{ex}

\begin{proof} The space $\N\cup \{\infty\}$ is not \rr-good by \cite[Proposition~3.6]{BR1}, or see Lemma~\ref{nogood}, below.  \end{proof}

Sometimes quotients behave well.

\begin{prop} \label{goodquo}   (1)~If $X$ is a locally connected space, all its quotient spaces are  \rr-good. (2)~In particular, if $X$ is a locally connected pseudocompact space then all its continuous images are \rr-good. \end{prop}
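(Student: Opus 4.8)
For part (1), the plan is to reduce everything to a standard topological fact: a quotient map carries a locally connected space onto a locally connected space. To verify this, I would take a quotient map $q\colon X\to Y$ with $X$ locally connected, an open set $U\sbq Y$, and a connected component $C$ of $U$. Then $q\inv(U)$ is open in $X$, so by local connectedness its components are open. Each such component $W$ is connected, so $q(W)$ is a connected subset of $U$ and hence lies in a single component of $U$; thus either $q(W)\sbq C$ or $q(W)\cap C=\vide$. It follows that $q\inv(C)$ is precisely the union of those components $W$ with $q(W)\sbq C$, a union of open sets and therefore open. Since $q$ is a quotient map, $C$ is open in $Y$. Hence components of open sets in $Y$ are open, i.e. $Y$ is locally connected, and Examples~\ref{basicexs}(1) then gives that $Y$ is \rr-good.

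For part (2), let $f\colon X\to Y$ be a continuous surjection onto a (Tychonoff) space $Y$. Since a continuous surjection need not be a quotient map, I would pass to the Stone--\v Cech level and work with $\beta f\colon \beta X\to\beta Y$. Its image is compact, hence closed in $\beta Y$, and it contains the dense set $f(X)=Y$, so $\beta f$ is onto; as a continuous surjection between compact Hausdorff spaces it is closed, and therefore a quotient map. The crucial point is that $\beta X$ is locally connected, which is where both hypotheses on $X$ are consumed. Granting this, part (1) applied to the quotient map $\beta f$ shows that $\beta Y$ is \rr-good, and Proposition~\ref{recall}(2) then transfers this back to $Y$.

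The main obstacle, and the only place where pseudocompactness is genuinely needed, is the claim that $\beta X$ is locally connected. Here I would invoke the classical theorem (Henriksen--Isbell) that, for a Tychonoff space, $\beta X$ is locally connected if and only if $X$ is locally connected and pseudocompact. Without pseudocompactness this can fail---for instance $\R$ is locally connected yet $\beta\R$ is not---so the argument through $\beta X$ really does use both hypotheses of (2). Once this theorem is in hand the remaining steps are formal, relying throughout on the standing Tychonoff assumption so that the Stone--\v Cech compactification is available and both Examples~\ref{basicexs}(1) and Proposition~\ref{recall}(2) apply.
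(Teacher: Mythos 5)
Your proposal is correct and matches the paper's route: part (1) is exactly the paper's argument (quotient maps preserve local connectedness, then Examples~\ref{basicexs}(1)), which you simply carry out in detail. For part (2) the paper's proof is just a citation of Walker [W, p.~223]; your derivation via the Henriksen--Isbell theorem ($\beta X$ locally connected iff $X$ is locally connected and pseudocompact) together with the closed, hence quotient, surjection $\beta f\colon \beta X\to\beta Y$ and Proposition~\ref{recall}(2) is precisely the argument underlying that citation, so the two approaches coincide.
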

\begin{proof} (1)~All the quotients spaces of a locally connected space are locally connected.  (2)~This is by \cite[page~223]{W}.  \end{proof} 

The long line is an example of part (2) of the proposition. 

Note also that every space, \rr-good or not, can be embedded in a direct product of copies of a closed interval, a compact, locally connected (\rr-good) space. 

This section closes with a pair of illustrative examples.  

The space $\Lambda = \beta \R \setminus (\beta \N \setminus \N)$ of \cite[6P]{GJ} is pseudocompact and \rr-good because $\beta \Lambda = \beta \R$  is \rr-good, but it is known not to be locally connected \cite[pp 221,222]{W}. This space will appear again in Example~\ref{Lambda2} and at the end of Section~2.

On the other hand the pseudocompact Tychonoff plank $\T$ is not \rr-good. If it were, $ \beta \T $ would also be \rr-good. However, $\beta \T$ has a clopen subset which is homeomorphic to the one-point compactification of $\N$, a space which is not \rr-good, showing $\beta\T$ is not rr-good by Proposition~\ref{recall}(1).
\\[-.5ex]

\noindent\tbf{2. Product spaces and the \rr-order.}
\setcounter{section}{2}
\setcounter{thm}{0}

In this section the question of \rr-good product spaces will be examined.  It will be easy to see that if a product is \rr-good, so are its factors.  The converse, false in general, will take up much of the section.
\begin{prop}\label{retract}  Suppose $Y$ is a retract of an \rr-good space $X$. Then, $Y$ is \rr-good.  \end{prop}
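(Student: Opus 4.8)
The plan is to exploit the fact that the \rr-order is defined by the purely ring-theoretic equation $f^2=fg$, so that every ring homomorphism automatically preserves the relation $\rro$. A retraction of spaces realizes $\C(Y)$ as an algebraic \emph{retract} of $\cx$, and I will show that the property of being a lower semi-lattice in the \rr-order transfers along such retracts. No topology beyond the mere existence of the retraction will be needed.

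First I would fix a retraction $\rho\colon X\to Y$ together with the inclusion $\iota\colon Y\hookrightarrow X$, so that $\rho\circ\iota=\mathrm{id}_Y$. These induce ring homomorphisms $\phi=\iota^{*}\colon\cx\to\C(Y)$, given by restriction $f\mapsto f|_Y$, and $\psi=\rho^{*}\colon\C(Y)\to\cx$, given by $g\mapsto g\circ\rho$. The identity $\rho\circ\iota=\mathrm{id}_Y$ then yields $\phi\circ\psi=\mathrm{id}_{\C(Y)}$. Since $Y$ is a subspace of the Tychonoff space $X$ it is again Tychonoff, so $\C(Y)$ is reduced and the \rr-order is a genuine partial order on it; in particular infima, when they exist, are unique.

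Next, given $g_1,g_2\in\C(Y)$, I would transport the problem upstairs: set $f_i=\psi(g_i)=g_i\circ\rho$ and let $f=f_1\wrr f_2$, which exists because $X$ is \rr-good. My candidate for $g_1\wrr g_2$ is $\phi(f)=f|_Y$. That $\phi(f)$ is a lower bound is immediate: applying the homomorphism $\phi$ to $f\rro f_i$ and using $\phi\circ\psi=\mathrm{id}$ gives $\phi(f)\rro\phi(f_i)=\phi(\psi(g_i))=g_i$.

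The step I expect to be the crux is showing $\phi(f)$ is the \emph{greatest} lower bound, and it is here that the splitting must be used in both directions. Given any $u\in\C(Y)$ with $u\rro g_1,g_2$, I would apply $\psi$ to obtain $\psi(u)\rro\psi(g_i)=f_i$ for $i=1,2$; the infimum property of $f$ in the \rr-good ring $\cx$ then forces $\psi(u)\rro f$; finally applying $\phi$ and invoking $\phi\circ\psi=\mathrm{id}$ returns $u\rro\phi(f)$. By uniqueness of infima, $\phi(f)$ is indeed $g_1\wrr g_2$, and so $Y$ is \rr-good. The only real care needed is in this last paragraph: the lower-bound direction consumes $\phi\circ\psi=\mathrm{id}$ on the target side, whereas the greatest-lower-bound direction first lifts the competitor $u$ via $\psi$, exploits minimality of $f$ upstairs, and then descends again.
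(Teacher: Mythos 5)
Your argument is correct and is essentially the paper's own proof: the paper likewise transports the two functions along the retraction, forms their infimum in the \rr-good ring upstairs, and composes back down, asserting as ``easy to see'' exactly the two verifications you carry out. You have merely made explicit what the paper leaves implicit, namely that ring homomorphisms preserve the relation $f^2=fg$ and that the splitting identity $\phi\circ\psi=\mathrm{id}$ lets both the lower-bound and the minimality properties descend.
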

\begin{proof} Let  $\phi$ and $\psi$ be continuous functions $X\overset{\phi}{\to} Y \overset{\psi}{\to} X$ with $\psi \phi = \mbf{1}_X$ and $f,g\in\cx$. It is easy to see that if $h= f\psi\wrr g\psi$ then $h\phi = f\wrr g$.  \end{proof}

\begin{cor} \label{goodprod}   If $X$ and $Y$ are spaces such that $X\times Y$ is \rr-good, then $X$ and $Y$ are \rr-good. \end{cor}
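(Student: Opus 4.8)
The plan is to reduce the corollary directly to Proposition~\ref{retract} by displaying each factor as a retract of the product $X\times Y$. Since the roles of $X$ and $Y$ are symmetric, it suffices to treat $X$; the argument for $Y$ is identical after interchanging the two coordinates. I assume throughout that both factors are nonempty, since otherwise $X\times Y$ is empty and there is nothing to prove.

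First I would fix a point $y_0\in Y$ and introduce two continuous maps: the insertion $\iota\colon X\to X\times Y$ given by $\iota(x)=(x,y_0)$, and the first projection $\pi\colon X\times Y\to X$ given by $\pi(x,y)=x$. Both are continuous for the product topology, and a one-line check gives $\pi\circ\iota=\mbf{1}_X$. Hence $X$ is a retract of $X\times Y$. Because $X\times Y$ is assumed \rr-good, Proposition~\ref{retract}, applied with the \rr-good space $X\times Y$ and its retract $X$, yields that $X$ is \rr-good. Repeating the construction with a fixed $x_0\in X$, using $y\mapsto(x_0,y)$ and the second projection $(x,y)\mapsto y$, exhibits $Y$ as a retract of $X\times Y$ as well, so $Y$ is likewise \rr-good.

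I expect no real obstacle here: the statement is a formal consequence of Proposition~\ref{retract}, and the only points needing attention are bookkeeping ones—verifying that $\pi\circ\iota$ is genuinely the identity on $X$ and that $\iota,\pi$ fit the hypotheses of Proposition~\ref{retract} so that the transported infima (the $h=f\psi\wrr g\psi$ construction in that proof) behave correctly. The single genuine assumption is the nonemptiness of both factors, which is what allows the base points $x_0$ and $y_0$ to be chosen.
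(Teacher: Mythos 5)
Your proof is correct and is precisely the argument the paper intends: Corollary~\ref{goodprod} is stated without proof as an immediate consequence of Proposition~\ref{retract}, via exactly your observation that each (nonempty) factor is a retract of the product through the slice embedding $x\mapsto(x,y_0)$ and the coordinate projection.
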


As already mentioned, the converse is false but there are some cases where there are positive results.
\begin{prop}\label{easyprods}   (1) If $\{X_\alpha\}_{\alpha \in A}$ are locally connected spaces all but finitely many of which are connected then $\prod_{\alpha \in A} X_\alpha$ is \rr-good.  

(2) If $\{X_1, \ldots, X_n\}$ is a finite set of $P$-spaces then $\prod_{i=1}^nX_i$ is \rr-good. \end{prop}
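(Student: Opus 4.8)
The plan is to reduce each part to one of the two basic classes of \rr-good spaces recorded in Examples~\ref{basicexs}, by showing that the product inherits, respectively, local connectedness and the $P$-space property.

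For part (1), I would invoke the classical characterization of local connectedness in products: a product $\prod_{\alpha\in A}X_\alpha$ of nonempty spaces is locally connected if and only if every factor is locally connected and all but finitely many factors are connected (see, e.g., \cite{W}). The hypotheses are precisely these two conditions, so $\prod_{\alpha\in A}X_\alpha$ is locally connected, and Examples~\ref{basicexs}(1) then gives that it is \rr-good. The only point to verify is that the stated hypotheses match the topological theorem exactly, which they do, so there is essentially no obstacle here.

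For part (2), the key step is to show that a \emph{finite} product of $P$-spaces is again a $P$-space. I would use the characterization of a $P$-space as one in which every point is a $P$-point, equivalently in which every $G_\delta$ set is open, equivalently in which every zero-set $\z(f)$ is open. Given $P$-spaces $X_1,\ldots,X_n$ and a point $(x_1,\ldots,x_n)$, take a $G_\delta$ set $G=\bigcap_m U_m$ containing it. Choosing, for each $m$, a basic open box $\prod_{i=1}^n V_{m,i}\subseteq U_m$ around the point, I intersect coordinatewise: $\bigcap_m V_{m,i}$ is a $G_\delta$ containing $x_i$ in the $P$-space $X_i$, hence contains a neighborhood $W_i$ of $x_i$. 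Because there are only finitely many coordinates, $\prod_{i=1}^n W_i$ is a neighborhood of $(x_1,\ldots,x_n)$ contained in $G$, so $(x_1,\ldots,x_n)$ is a $P$-point and the finite product is a $P$-space. Finiteness is essential here: in an infinite product the recombined box would constrain infinitely many coordinates and fail to be a neighborhood (indeed $\{0,1\}^\omega$ is not a $P$-space), so this is the step where the hypothesis is genuinely used and is the main thing to get right.

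Finally, I would observe that every $P$-space is basically disconnected: in a $P$-space each zero-set is open, so each cozero-set $\coz f$ is clopen, whence $\cl(\coz f)=\coz f$ is open. Applying Examples~\ref{basicexs}(2) to the finite product, which has just been shown to be a $P$-space, gives that $\prod_{i=1}^n X_i$ is \rr-good, completing part (2).
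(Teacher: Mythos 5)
Your proof is correct and takes essentially the same route as the paper: the paper's own (one-line) proof asserts that the products in (1) are locally connected and that a finite product of $P$-spaces is a $P$-space, then appeals to the two classes in Examples~\ref{basicexs}. You have simply supplied the standard details the paper leaves implicit — the product characterization of local connectedness, the finiteness argument for $P$-spaces, and the observation that every $P$-space is basically disconnected — all of which are accurate.
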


\begin{proof} (1) These products are locally connected and, hence, \rr-good. (2)~A finite product of $P$-space is a $P$-spaces and, hence, \rr-good. \end{proof}

As examples, all euclidean spaces are \rr-good. Other types of \rr-good products will be found at the end of this section.

The following will show that if a space $X$ has enough clopen sets and is \rr-good, then it is basically disconnected. This will play a role later in this section and again in Section~4.

\begin{prop} \label{0-dim}  Let $X$ be a space which has a clopen $\pi$-base. If $X$ is \rr-good then $X$ is basically disconnected.  \end{prop}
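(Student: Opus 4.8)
The plan is to verify basic disconnectedness in its cozero formulation: for an arbitrary $f\in\cx$ I must show that $\cl(\coz f)$ is open, equivalently that $\inte\z(f)=X\setminus\cl(\coz f)$ is clopen. So the whole argument reduces to producing, for each $f$, a clopen set equal to $\inte\z(f)$.

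The first step is to manufacture clopen sets out of the \rr-order by taking an infimum against the constant function $\mbf 1$. Replacing $f$ by $u:=\mbf 1-\min(|f|,\mbf 1)\in\cx$, one has $\{x:u(x)=1\}=\z(f)$. Now observe that any lower bound $h\rro u,\mbf 1$ is forced to be idempotent: $h\rro\mbf 1$ means $h^2=h$, so $h=\chi_V$ for a clopen $V$, and then $h\rro u$ says exactly that $u\equiv 1$ on $V$, i.e.\ $V\sbq\z(f)$; conversely every such $\chi_V$ is a lower bound. Since $X$ is \rr-good, the infimum $u\wrr\mbf 1$ exists, and being itself a lower bound it equals $\chi_{V_0}$ for some clopen $V_0\sbq\z(f)$; dominating every other lower bound then forces $V\sbq V_0$ for each clopen $V\sbq\z(f)$. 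Thus \rr-goodness hands me the largest clopen set $V_0$ contained in $\z(f)$.

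The second step is to identify $V_0$ with $\inte\z(f)$ using the clopen $\pi$-base. On one hand $V_0$ is open and lies inside $\z(f)$, so $V_0\sbq\inte\z(f)$. On the other hand, let $W$ be the union of all $\pi$-base members contained in $\inte\z(f)$; the $\pi$-base property makes $W$ dense in $\inte\z(f)$, while each such member is a clopen subset of $\z(f)$ and hence lies in $V_0$. Therefore $\inte\z(f)\sbq\cl(W)\sbq\cl(V_0)=V_0$, the last equality because $V_0$ is clopen. Combining the two inclusions gives $V_0=\inte\z(f)$, so $\inte\z(f)$ is clopen, $\cl(\coz f)$ is open, and $X$ is basically disconnected.

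The conceptual heart, and the step I expect to be the only real obstacle, is the first one: recognizing that pairing an arbitrary element with the unit turns \rr-infima into idempotents, so that \rr-goodness is precisely what guarantees a largest clopen subset of each zero set. Once that is in hand, the clopen $\pi$-base does nothing more than guarantee there are enough clopen sets to fill $\inte\z(f)$ densely, and the closedness of $V_0$ finishes the identification. I would double-check only the two routine points that the idempotents of $\cx$ are exactly the characteristic functions of clopen sets, and that $X\setminus\cl(\coz f)=\inte\z(f)$.
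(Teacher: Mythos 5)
Your proposal is correct and is essentially the paper's own argument: the paper likewise extracts the largest clopen subset of $\z(f)$ as an \rr-infimum against the unit (it uses $\mbf{1}\wrr(\mbf{1}-f)$, noting that any lower bound of $\mbf 1$ is an idempotent, where you use the normalized $u\wrr\mbf 1$), and then invokes the clopen $\pi$-base to conclude that this clopen set is all of $X\setminus\cl(\coz f)$. The only cosmetic differences are your normalization $u=\mbf 1-\min(|f|,\mbf 1)$, which is unnecessary since $\mbf{1}-f$ already equals $1$ exactly on $\z(f)$, and your positive density argument in place of the paper's proof by contradiction.
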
 

\begin{proof} For $f\in\cx$ it will be shown that $\cl (\coz f)$ is clopen. Since $X$ is \rr-good, $h = \mbf{1} \wrr (\mbf{1} -f)$ exists.  Because $h\rro \mbf{1}$, $h$ is an idempotent and $\coz h=D$ is clopen. Moreover, $h=h^2= h(\mbf{1}-f)$ implies that $hf = \mbf{0}$. When $E\sbq \z(f)$ is clopen, let the idempotent $e$ have cozero set $E$. It follows that $e\rro \mbf{1}, (\mbf{1} -f)$ and, from this, $e\rro h$, giving $E\sbq D$. Hence, $D$ is the unique largest clopen set in $\z(f)$.

If $\cl (\coz f) \ne X\setminus D$ then, because of the clopen $\pi$-base, there would be a non-empty clopen set in $(X\setminus D) \setminus \cl (\coz f)$. This would contradict the fact that $D$ is the maximal clopen set in $\z(f)$. \end{proof} 

A first step in finding examples is to recall two results of Negrepontis.

\begin{prop} \label{neg}  (1)~\cite[Theorem 7.3]{N} For any $P$-space $X$ there exists an extremally disconnected space $Y$ for which $X\times Y$ is not an $F$-space. (2)~\cite[Theorem~6.3]{N} The product of a $P$-space and a compact basically disconnected space is basically disconnected. \end {prop}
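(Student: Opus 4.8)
My plan is to treat the two parts quite differently: part~(2) admits a short, self-contained topological argument, while part~(1) is an existence statement whose proof requires a genuine construction (this is why it is quoted from Negrepontis). I will spell out the argument for~(2) and then outline the strategy for~(1), isolating the step I expect to be the real obstacle.

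For part~(2), write $X$ for the $P$-space and $Y$ for the compact basically disconnected space. The key observation is that, because $Y$ is compact, each $h\in\C(X\times Y)$ corresponds to a continuous map $\hat h\colon X\to\C(Y)$ into the Banach space $(\C(Y),\|\cdot\|_\infty)$, via $\hat h(x)=h_x$ where $h_x(y)=h(x,y)$; continuity here is the standard exponential law for a compact second factor (a tube-lemma argument). Now I would use the characteristic feature of $P$-spaces: every continuous map from a $P$-space to a metric space is locally constant. Indeed, for fixed $x_0$ the function $x\mapsto\|\hat h(x)-\hat h(x_0)\|_\infty$ lies in $\C(X)$, so its zero-set is a zero-set of $X$, hence open in the $P$-space $X$; on that open neighbourhood $U_0$ of $x_0$ one has $h_x=h_{x_0}$, i.e. $h(x,y)=h_{x_0}(y)$ for all $(x,y)\in U_0\times Y$. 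Consequently $\coz h\cap(U_0\times Y)=U_0\times\coz h_{x_0}$, and since $U_0\times Y$ is open,
\[
\cl(\coz h)\cap(U_0\times Y)=\cl_{U_0\times Y}\bigl(U_0\times\coz h_{x_0}\bigr)=U_0\times\cl_Y(\coz h_{x_0}).
\]
Because $Y$ is basically disconnected, $\cl_Y(\coz h_{x_0})$ is clopen, so the right-hand side is open in $X\times Y$. As the neighbourhoods $U_0$ cover $X$, the closed set $\cl(\coz h)$ is open at each of its points, hence clopen. Since $h$ was arbitrary, every cozero set of $X\times Y$ has clopen closure, which is exactly basic disconnectedness.

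For part~(1) the situation is reversed: here I must manufacture a $Y$. First I would reduce to the essential case in which $X$ is not discrete (if $X$ is discrete then $X\times Y$ is a free union of copies of $Y$, which is extremally disconnected and so an $F$-space), and fix a non-isolated point $p\in X$. Since $P$-spaces are zero-dimensional (cozero sets are clopen and, $X$ being Tychonoff, separate points), I would use non-isolatedness of $p$ to extract a discrete family $\{C_i\}_{i\in D}$ of non-empty, pairwise disjoint clopen sets accumulating at $p$ (so $p\notin\bigcup_i C_i$ but $p\in\cl(\bigcup_i C_i)$), indexed by a discrete set $D$ of suitable cardinality. The extremally disconnected witness is then $Y=\beta D$. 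Splitting $D=D_0\sqcup D_1$ and viewing each index $i$ as a point of $\beta D$, the sets $A=\bigcup_{i\in D_0}(C_i\times\{i\})$ and $B=\bigcup_{i\in D_1}(C_i\times\{i\})$ are disjoint cozero sets of $X\times Y$; the goal is to show they are \emph{not} completely separated, contradicting the $F$-space property.

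The main obstacle is precisely this last step: proving that $\cl A$ and $\cl B$ meet in $\beta(X\times Y)$. Here one must exploit the abundance of ultrafilters supplied by the extremally disconnected factor $\beta D$ together with the fact that $p$ is a common accumulation point of all the $C_i$, and then control $\beta(X\times\beta D)$ — the natural tool being a Glicksberg-type identification $\beta(X\times\beta D)=\beta X\times\beta D$ once the relevant pseudocompactness is in place. Converting the ``both families accumulate at $p$'' phenomenon into a genuine point of $\cl A\cap\cl B$ is the crux of Negrepontis's construction and the step I expect to require the most care.
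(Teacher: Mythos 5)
The first thing to note is that the paper contains no proof of this proposition: both parts are quoted from Negrepontis \cite{N} and used as black boxes (in Corollary~\ref{notF}), so the comparison here is between your attempt and a citation. Your part~(2) is correct and complete, and is more than the paper does. Compactness of $Y$ gives norm-continuity of $x\mapsto h_x$ into $(\C(Y),\Vert\cdot\Vert_\infty)$; the $P$-space property makes this map locally constant, since $\{x : \Vert h_x-h_{x_0}\Vert_\infty=0\}$ is a zero set, hence open; and then $\cl(\coz h)\cap(U_0\times Y)=U_0\times\cl_Y(\coz h_{x_0})$ is open because $Y$ is basically disconnected. All steps check (the closure computation uses only that $U_0\times Y$ is open and that the closure of a product is the product of the closures). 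This is in effect the same ``tile'' technique the paper itself uses later in Theorem~\ref{LindProp2} via the Comfort--Hindman--Negrepontis lemma, with compactness of $Y$ supplying global tiles $U_0\times Y$ instead of \cite[Lemma~3.2]{CHN}.

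Part~(1) is where the genuine gap lies, and you identify it yourself: the crux --- showing that $A$ and $B$ are not completely separated --- is not carried out, so this part remains, in effect, a citation to \cite{N}, exactly as in the paper. Beyond incompleteness, two concrete points in your sketch would fail or need repair. First, the hoped-for ``Glicksberg-type identification $\beta(X\times\beta D)=\beta X\times\beta D$'' can never become available: in the only relevant case $X$ is non-discrete, hence infinite, and an infinite $P$-space is never pseudocompact (a countably infinite subset is closed and discrete, and can be fattened to pairwise disjoint clopen sets whose union is clopen, supporting an unbounded continuous function); therefore $X\times\beta D$ is not pseudocompact, and Glicksberg's theorem then says the identification \emph{fails}. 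Any successful argument must work in $X\times\beta D$ directly. Second, the claim that $A=\bigcup_{i\in D_0}(C_i\times\{i\})$ and $B$ are cozero sets of $X\times\beta D$ is not automatic: an infinite union of clopen boxes need not be a cozero set, and continuity of a witnessing function at points of $X\times(\beta D\setminus D)$ is precisely where the difficulty sits. Finally, your reduction observation in fact shows that the proposition as quoted needs a non-discreteness hypothesis on $X$ (for discrete $X$ the product with any extremally disconnected $Y$ is a free union of extremally disconnected spaces, hence an $F$-space); that is a caveat about the quoted statement itself rather than about your argument.
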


\begin{cor} \label{notF}  (1)~If $X$ is a $P$-space and $Y$ is extremally disconnected such that $X\times Y$ is not an $F$-space, then $X\times Y$ is not \rr-good. (2)~If $X$ is a $P$-space and $Y$ is compact and basically disconnected, then $X\times Y$ is \rr-good.   \end{cor}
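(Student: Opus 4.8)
The plan is to deduce both parts from Negrepontis's results quoted in Proposition~\ref{neg} together with the machinery already assembled, handling part (2) immediately and reserving the real work for part (1).

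Part (2) should be essentially a one-line deduction. Since $X$ is a $P$-space and $Y$ is compact and basically disconnected, Proposition~\ref{neg}(2) tells us that $X\times Y$ is itself basically disconnected. By Examples~\ref{basicexs}(2) every basically disconnected space is \rr-good, so $X\times Y$ is \rr-good.

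For part (1) the strategy is to set up a contradiction through Proposition~\ref{0-dim}, whose hypothesis is the existence of a clopen $\pi$-base; so the first task is to verify that $X\times Y$ has one. A Tychonoff $P$-space is zero-dimensional: its cozero sets form a base and each is clopen, since the complementary zero set is a $G_\delta$ and hence open in a $P$-space. Thus $X$ has a clopen base. For the extremally disconnected factor $Y$ I would start from a nonempty open set, shrink it to a nonempty cozero set $\coz f$, pass to the open set $\{\,|f|>\varep\,\}$ for a suitable $\varep$, and take its closure; extremal disconnectedness makes that closure clopen, and it still sits inside the original open set, so $Y$ has a clopen $\pi$-base. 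Since the rectangles $U\times V$ form a base for the product topology, refining each coordinate by a clopen piece drawn from the respective $(\pi)$-base produces a nonempty clopen rectangle inside any prescribed nonempty open set, so $X\times Y$ indeed has a clopen $\pi$-base.

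With that in hand the contradiction is quick: if $X\times Y$ were \rr-good, then Proposition~\ref{0-dim} would force $X\times Y$ to be basically disconnected, hence an $F$-space, contradicting the standing hypothesis that $X\times Y$ is not an $F$-space; therefore $X\times Y$ is not \rr-good. I expect the only genuinely non-formal step to be the verification that the extremally disconnected factor contributes a clopen $\pi$-base, together with recalling that basically disconnected spaces are $F$-spaces; everything else is bookkeeping with the quoted propositions.
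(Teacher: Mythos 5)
Your proof is correct and follows essentially the same route as the paper: part (2) is exactly the paper's appeal to Proposition~\ref{neg}(2) plus Examples~\ref{basicexs}(2), and part (1) is the paper's contradiction via Proposition~\ref{0-dim} (rr-good with a clopen $\pi$-base forces basically disconnected, hence $F$-space). The only difference is that you explicitly verify that $X\times Y$ has a clopen $\pi$-base (zero-dimensionality of the $P$-space factor, clopen closures in the extremally disconnected factor, then clopen rectangles), a step the paper leaves implicit when invoking Proposition~\ref{0-dim}.
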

\begin{proof} (1) If $X\times Y$ were \rr-good, Proposition~\ref{0-dim} would say that it is basically disconnected and, hence, an $F$-space. (2)~Proposition~\ref{neg}~(2) gives the result.  \end{proof}

The case where neither space is a $P$-space can 
also be dealt with as follows.

\begin{thm}\label{prod} Let $X$ and $Y$ be spaces such that each has a clopen $\pi$-base and are not $P$-spaces. The space $X \times Y$ is not \rr-good. \end{thm}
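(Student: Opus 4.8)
The plan is to argue by contradiction: assume $X\times Y$ is \rro-good and show this forces $X\times Y$ to be an $F$-space, a conclusion that the failure of the $P$-space property in the two factors will rule out. The role of the clopen $\pi$-bases is exactly to convert \rro-goodness into a strong disconnectedness (hence $F$-space) statement via Proposition~\ref{0-dim}, while the non-$P$-space hypotheses will be used only to manufacture a pair of disjoint cozero-sets in the product that cannot be completely separated.

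First I would observe that $X\times Y$ itself has a clopen $\pi$-base: if $\mc B_X$ and $\mc B_Y$ are clopen $\pi$-bases for the factors, then $\{B\times C: B\in\mc B_X,\ C\in\mc B_Y\}$ consists of clopen sets and is a $\pi$-base, since any nonempty open subset of $X\times Y$ contains a nonempty box $U\times V$, and $U\spq B$, $V\spq C$ for suitable $B,C$. Assuming $X\times Y$ is \rro-good, Proposition~\ref{0-dim} then gives that $X\times Y$ is basically disconnected, and hence an $F$-space. I will use the standard characterization that in an $F$-space any two disjoint cozero-sets are completely separated.

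Next, since neither factor is a $P$-space, there are $f\in\cx$ and $g\in C(Y)$, which may be taken nonnegative, together with points $p\in X$, $q\in Y$ satisfying $f(p)=0$, $p\in\cl(\coz f)$ and $g(q)=0$, $q\in\cl(\coz g)$; indeed a non-$P$-space has a cozero-set that is not closed, and any boundary point of it serves. In $X\times Y$ put $\phi(x,y)=f(x)-g(y)$, a continuous function, and define $U=(\coz f\times\coz g)\cap\{\phi>0\}$ and $V=(\coz f\times\coz g)\cap\{\phi<0\}$. These are cozero-sets, being finite intersections of cozero-sets, and are clearly disjoint. The crux is to show $(p,q)\in\cl U\cap\cl V$. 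Given a basic neighbourhood $N_X\times N_Y$ of $(p,q)$, choose $x_0\in N_X\cap\coz f$ and set $c=f(x_0)>0$; since $g(q)=0$, the set $N_Y\cap g\inv((-c,c))$ is a neighbourhood of $q$ and so meets $\coz g$, yielding $y_0$ with $0<g(y_0)<c$. Then $(x_0,y_0)\in U$, so $(p,q)\in\cl U$; reversing the roles of the two coordinates gives $(p,q)\in\cl V$. A pair of completely separated sets has disjoint closures, so $U$ and $V$ are not completely separated, contradicting the $F$-space property and completing the proof.

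The main obstacle is precisely the closure computation $(p,q)\in\cl U\cap\cl V$: one must produce, inside an arbitrary neighbourhood of $(p,q)$, a point of $\coz f\times\coz g$ on which $f$ strictly dominates $g$, and one on which $g$ dominates $f$. The key idea is that the two coordinates can be chosen successively, using continuity of $g$ at $q$ to force $g(y_0)$ below the already-fixed value $c=f(x_0)$; this is what makes $p\in\cl(\coz f)$ and $q\in\cl(\coz g)$ interact. Notably this step uses only that the factors are not $P$-spaces, so the clopen $\pi$-base hypothesis is needed solely to invoke Proposition~\ref{0-dim}.
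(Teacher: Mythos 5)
Your proof is correct, and its first half is exactly the paper's argument: the products of basic clopen sets form a clopen $\pi$-base for $X\times Y$, so \rr-goodness would force $X\times Y$ to be basically disconnected by Proposition~\ref{0-dim} (the paper's proof cites Proposition~\ref{retract} here, evidently a typo), hence an $F$-space. Where you diverge is the final step. The paper finishes in one line by quoting a known theorem (Curtis, or \cite[14Q.1]{GJ}): if a product of two spaces is an $F$-space, then one factor must be a $P$-space, contradicting the hypotheses. You instead reprove that fact in the case at hand: from non-closed cozero sets $\coz f\sbq X$, $\coz g\sbq Y$ with boundary points $p,q$, you build the disjoint cozero sets $U=(\coz f\times\coz g)\cap\{f(x)>g(y)\}$ and $V=(\coz f\times\coz g)\cap\{f(x)<g(y)\}$ and show $(p,q)\in\cl U\cap\cl V$ by the successive-coordinate choice, so $U$ and $V$ cannot be completely separated, contradicting the $F$-space property. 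Your closure computation is sound (cozero sets are open, so a boundary point of $\coz f$ indeed satisfies $f(p)=0$, and intersections and products of cozero sets are cozero). The trade-off: the paper's citation is shorter and signals that the obstruction is a classical one, while your version is self-contained, makes the interaction between the two non-$P$-space hypotheses explicit, and is essentially the standard proof of the cited result of Curtis and of \cite[14Q.1]{GJ} specialized to what the theorem needs.
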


\begin{proof} Every non-empty open set in $X \times Y$ contains a non-empty clopen. If $X \times Y$ were \rr-good it would be basically disconnected by Proposition~\ref{retract}, hence an $F$-space, so one of $X$ and $Y$ would be a $P$-space by \cite[Theorem~p.\,51]{Cu} or by \cite[14Q.1]{GJ} 
\end{proof}

Theorem~\ref{prod} yields families of examples.

\begin{exs} \label{badprods}  If  $X$ and $Y$ are basically disconnected but not 
$P$-spaces, then $X$ and $Y$ are \rr-good but $X\times Y$ is not \rr-good.  As an illustration, $\beta \N \times \beta \N$ is not \rr-good. \end{exs}

Another example of a product of \rr-good spaces that is not \rr-good is found in the next result.  It is of a quite different sort than in Examples~\ref{badprods}, indeed, the factors are connected. The functions needed in the proof are best presented by a description of their graphs.

\begin{thm}\label{hager}   The space $\betatwo$ is not \rr-good.  \end{thm}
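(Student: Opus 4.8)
The plan is to show that $\betatwo = \beta\R\times\beta\R$ is not \rr-good by exhibiting a pair of functions $f,g\in\C(\beta\R\times\beta\R)$ (equivalently, a pair in $\C^*(\Rtwo)$, via the correspondence $\beta(\R^2)$... but here we use the specific product $\betatwo$) whose infimum in the \rr-order fails to exist. Since $\betatwo$ is compact, $\C(\betatwo)=\C^*(\betatwo)$, and I will work with the restrictions of these functions to the dense subspace $\Rtwo$, where they are easy to describe by their graphs, and then argue that the obstruction to an infimum already lives in the ``corner at infinity,'' i.e. in $(\beta\R\setminus\R)\times(\beta\R\setminus\R)$.

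**First I would** set up a candidate pair. Guided by Lemma~\ref{fundlem}(2), a common infimum $h$ of $f$ and $g$ must satisfy $\coz h\sbq \z(f-g)\cap\coz f$, so $h$ is supported on the set where $f$ and $g$ agree and are nonzero. I would choose $f$ and $g$ on $\Rtwo$ so that the closure (in $\betatwo$) of $\{f=g\ne 0\}$ is badly behaved: specifically, so that $\cl_{\beta\R\times\beta\R}\bigl(\z(f-g)\cap\coz f\bigr)$ is not clopen and cannot be realized as $\cl(\coz h)$ for any single $h$. A natural device is to let $f$ and $g$ oscillate with increasing frequency as one moves toward infinity in each factor, arranging that along one ``direction to infinity'' they agree and along another they disagree. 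The graphs are used precisely to make this oscillation explicit: I expect $f$ and $g$ to be bounded continuous functions on $\Rtwo$ (hence extending to $\betatwo$) whose agreement set accumulates on a subset of the remainder $\betatwo\setminus\Rtwo$ that separates points of $\beta\R\setminus\R$ in an incompatible way in the two coordinates.

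**The key reduction** is then to suppose, for contradiction, that $h=f\wrr g$ exists. By Lemma~\ref{fundlem}, $h$ agrees with $f$ (and with $g$) on $\cl(\coz h)$ and $\coz h\sbq\z(f-g)\cap\coz f$; moreover any idempotent (clopen cozero set) sitting below both $f$ and $g$ must sit below $h$. I would exploit the structure of $\beta\R\setminus\R$: it contains a copy of $\beta\N\setminus\N$ along each coordinate direction, and the product remainder $(\beta\R\setminus\R)\times(\beta\R\setminus\R)$ is where the two factors' good behaviour (each $\beta\R$ is \rr-good, being basically disconnected off a locally connected piece — indeed $\beta\R$ is \rr-good since $\R$ is locally connected) fails to combine. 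The contradiction should come from producing two families of clopen-cozero idempotents $e_\alpha\rro f,g$ whose cozero sets cover, in the limit, more of $\z(f-g)\cap\coz f$ than any single $h$ can accommodate: the required $\coz h$ would have to be simultaneously open and closed (forced by $h\rro f$ and the need to dominate all the $e_\alpha$) yet its trace on the product remainder cannot be clopen, since the two coordinate projections of the accumulation set are incompatible.

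**The hard part will be** the construction of $f$ and $g$ and the verification that no infimum exists — not the formal manipulation with Lemma~\ref{fundlem}, which is routine. The delicate point is controlling the behaviour on the product remainder $(\beta\R\setminus\R)\times(\beta\R\setminus\R)$, where the topology is genuinely complicated; this is exactly the phenomenon flagged in the introduction (work of Comfort, Hindman and Negrepontis, and Glicksberg's theorem on when $\beta(X\times Y)=\beta X\times\beta Y$). The crux is that $\beta\R\times\beta\R\ne\beta(\R^2)$, so the ``extra'' points in the product that are not in $\beta(\R\times\R)$ are where the incompatibility is forced, and I expect the author's graphs to be engineered precisely to detect these extra points. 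Making the accumulation-set argument rigorous — showing that the putative $\cl(\coz h)$ is forced to be a clopen set whose existence contradicts the non-$F$-space-like behaviour of this corner — is where the real work lies.
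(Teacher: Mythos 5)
There is a genuine gap, and it is fatal to the mechanism you propose rather than just to missing details. The space $\beta\R$ is connected (it contains $\R$ as a dense connected subspace), so $\betatwo$ is connected; consequently $\C(\betatwo)$ has no idempotents other than $\mbf{0}$ and $\mbf{1}$, and no nonempty proper clopen subsets exist at all. Your plan to ``produce two families of clopen-cozero idempotents $e_\alpha \rro f,g$'' is therefore vacuous, and the claim that the putative $\coz h$ ``would have to be simultaneously open and closed'' is unsupported: $h \rro f$ forces no such thing. (In Proposition~\ref{0-dim} of the paper, clopenness of $\coz h$ comes from $h \rro \mbf{1}$, which makes $h$ an idempotent; that argument does not transfer to a general pair $f,g$, and in a connected space it could only ever yield the trivial idempotents.) Any obstruction to the existence of $f \wrr g$ in a connected space must be detected by \emph{non-idempotent} lower bounds, which your outline never provides. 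Moreover, you explicitly defer the construction of $f$ and $g$ and the verification that no infimum exists as ``the hard part,'' so what remains is a heuristic rather than a proof. A small side error: the product does not contain ``extra points not in $\beta(\R\times\R)$''; the canonical map goes from $\beta(\R^2)$ \emph{onto} $\betatwo$, so the product has fewer points, not more.

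Your instinct that the crux is the Glicksberg-type failure $\beta(\N\times\N) \ne \betaNtwo$ is exactly where the paper's proof lands, but the implementation is quite different and is the real content. The paper builds $f,g \in \C(\Rtwo)$ explicitly along the diagonal: above a line parallel to the diagonal, $f \equiv 3$ and $g \equiv 2$ (so they disagree and are nonzero); below the diagonal band, $f = g = 0$; and on a small disk around each $(n,n)$, $n \in \N$, both functions equal a cone $h_n$ of height $1$. Uniform continuity yields extensions $F, G, H_n \in \C(\betatwo)$. If $H = F \wrr G$ existed, Lemma~\ref{fundlem}(2) forces $H(n,m) = 0$ for all $n \ne m$ in $\N$ (either $f$ and $g$ both vanish there, or they disagree there), while $H_n \rro F, G$ and hence $H_n \rro H$ forces $H(n,n) = 1$. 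Thus $H$ restricted to $\cl_{\betatwo}(\N\times\N) = \betaNtwo$ would be a continuous extension of the Kronecker delta to $\betaNtwo$, which is known not to exist. The non-idempotent cones $H_n$ are what pin the infimum to prescribed values; that is precisely the ingredient missing from your plan.
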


\begin{proof} Consider a band of width 2 centred on the diagonal $D =\{(x,x)\mid x\in \R\}$ in $\Rtwo$, bounded by two lines parallel to $D$, $L_1$ above and $L_2$ below. Functions $f,g\in \C(\Rtwo)$ will be defined.

\begin{itemize}
\item[(1)] In the region above and including line $L_1$, $f(x,y) = 3$ and $g(x,y)=2$. 
\item[(2)] In the region below and including $L_2$, $f(x,y)= g(x,y) =0$. 
\item[(3)] Let $L_3$ be the line parallel to $D$, midway between $D$ and $L_1$. On any line $M$ perpendicular to $D$, let $f$ go linearly from 3 to 0 as $(x,y)$ goes from $L_1$ to $L_3$. Similarly, $g$ will go linearly from 2 to 0 on $M$.
\item[(4)] Everywhere below $L_3$ both $f$ and $g$ will be 0 except where indicated below.
\item[(5)] For each $n\in \N$ consider a disk $\Delta_n$ of radius 1/4 around $(n,n)$. The functions $f$ and $g$ will coincide on $\Delta_n$ and their graphs there will be a regular cone of height 1 and centre $(n,n)$. 
\end{itemize}

\vspace{-.7ex}There are several claims to be proved.

\noindent\tbf{Claim 1:} Both $f$ and $g$ extend to $\betatwo$.

As is customary in $\R\times \R$, the first factor is the horizontal axis and the second the vertical one. 

It must be shown that the oscillation condition of \cite[Theorem, page~200]{W} is satisfied so that $f$ and $g$ can be extended to $\beta\R \times \beta\R$. 

It is readily seen that the functions $f$ and $g$ are uniformly continuous because of the repeated patterns along the diagonal.  This means that for every  $\varep >0$ there is $\delta >0$ such that $|f(x,y) -f(u,v)|< \varep$ if $\Vert(x,y), (u,v)\Vert < \delta$. Now fix $(x_0, y_0)$, set $\zeta = (1/\sqrt{2}) \delta$ and consider the vertical lines through $x_0+ m\zeta$ and horizontal lines through $y_0+n\zeta$, with $m,n\in \Z$.  Set $U_1= \bigcup_{m\in \Z}(x_0 +m\zeta, x_0+(m+1)\zeta)$, a union of intervals along the horizontal axis; and $V_1= \bigcup_{n\in \Z}(y_0+ n\zeta, y_0+ (n+1)\zeta)$, a union of intervals along the vertical axis. 

Similarly, construct $U_2$ and $V_2$ using $\{x_0+ (1/2 + m)\zeta\}$ and $\{y_0+ (1/2+ n)\zeta\}$, $m,n \in \Z$.  
Then the open sets $U_1\times V_1$ and $U_2\times V_2$ cover $\R\times \R$ and make a grid satisfying the oscillation conditions. Since this can be done for all $\varep >0$, $f$ and $g$ can be extended to elements of $\C(\beta \R\times \beta \R)$, say $F$ and $G$, respectively.

\noindent\tbf{Claim 2:}  If $H = F\wrr G$ then for $n, m \in \N, n \neq m, H(n,m)=0$. In the case where $ n > m $, this holds because both $f$ and $g$ vanish at $(n,m)$. In the case where $n < m $, this holds because $f$ and $g$ do not agree at $(n,m)$ (Lemma~\ref{fundlem}(2)).

\noindent\tbf{Claim 3:} For $n\in \N$ let $h_n\in \C(\Rtwo)$ be the function whose graph is the cone defined for $(n,n)$.
The function $h_n$ extends to $\betatwo$ because of the 
same grid as used for $f$ and $g$. Let $H_n$ denote its extension to $\betatwo$. Observe that   $H_n \rro F,G$ since $h_n \rro f,g$ on the dense subset $\Rtwo $.

\noindent\tbf{Claim 4:} It follows from Claim~3 that $H(n,n) =1$ for all $n\in \N$ since $H_n(n,n) = 1$ and $H_n\rro H$. This means that $H$, restricted to $\N\times \N$, is the Kronecker delta function, which is shown in \cite[p.\,196]{W} not to extend to $\betaNtwo$. This is a contradiction.
\end{proof}

\begin{cor}\label{higher}  For any $m,n\ge 1$, $\beta (\R^m)\times \beta (\R^n)$ is not \rr-good.  \end{cor}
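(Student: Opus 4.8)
The plan is to reduce the general case to Theorem~\ref{hager} by exhibiting $\betatwo$ as a retract of $\beta(\R^m)\times\beta(\R^n)$ and then invoking Proposition~\ref{retract}. The whole argument is functorial; no new analysis of $f\wrr g$ is needed, since Theorem~\ref{hager} already supplies the delicate construction.

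First I would observe that $\R$ is a retract of $\R^m$ for every $m\ge 1$: the inclusion $\psi\colon \R\to\R^m$, $x\mapsto (x,0,\dots,0)$, and the first-coordinate projection $\phi\colon \R^m\to\R$ are continuous and satisfy $\phi\psi=\mbf 1_\R$. By the functoriality of the Stone-\v Cech compactification, $\phi$ and $\psi$ extend to continuous maps $\beta\phi\colon \beta(\R^m)\to\beta\R$ and $\beta\psi\colon \beta\R\to\beta(\R^m)$ with $\beta\phi\circ\beta\psi=\beta(\phi\psi)=\mbf 1_{\beta\R}$; hence $\beta\R$ is a retract of $\beta(\R^m)$. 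The identical argument yields $\beta\R$ as a retract of $\beta(\R^n)$.

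Next I would assemble these coordinatewise. Writing $r_1,s_1$ and $r_2,s_2$ for the respective retractions and their sections, the maps $r_1\times r_2$ and $s_1\times s_2$ satisfy $(r_1\times r_2)(s_1\times s_2)=\mbf 1$, so they exhibit $\betatwo$ as a retract of $\beta(\R^m)\times\beta(\R^n)$. Were $\beta(\R^m)\times\beta(\R^n)$ \rr-good, Proposition~\ref{retract} would force its retract $\betatwo$ to be \rr-good as well, contradicting Theorem~\ref{hager}. The only point demanding any care is that a coordinate retraction of $\R$ inside $\R^m$ lifts to the compactifications, and this is immediate from the universal property of $\beta$ together with the uniqueness of continuous extensions; no genuine obstacle arises beyond tracking which factor is being compactified.
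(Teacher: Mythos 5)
Your proposal is correct and follows the paper's own proof essentially verbatim: the paper likewise notes that $\R$ is a retract of $\R^m$, passes to $\beta\R$ as a retract of $\beta(\R^m)$ (and similarly for $n$), takes the product of the two retractions, and concludes via Proposition~\ref{retract} and Theorem~\ref{hager}. Your only addition is spelling out the functoriality of $\beta$ behind the step ``retract of $\R^m$ implies retract of $\beta(\R^m)$,'' which the paper leaves implicit.
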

\begin{proof}  The space $\R$ is a retract of $\R^m$ and, hence, $\beta\R$ is a retract of $\beta(\R^m)$ and, similarly, $\beta \R$ is retract of $\beta (\R^n)$.  From this, $\beta\R \times \beta \R$ is a retract of $\beta(\R^m) \times \beta (\R^n)$. The result follows from Theorem~\ref{hager} and Proposition~\ref{retract}. \end{proof}

It would be interesting to know if $\R\times \beta \R$ is \rr-good or not.  The methods used above do not apply to this space. 

For spaces $X$ and $Y$ it is possible for $\beta (X\times Y)$ to be homeomorphic to $\beta X\times \beta Y$, where the homeomorphism does not fix $X\times Y$ (see \cite[8.18]{W} for such an example). In the case of a homeomorphism, all the spaces $X\times Y$, $\beta (X\times Y)$ and $\beta X\times \beta Y$ are simultaneously \rr-good or none of them is.  The latter possibility is illustrated in the next example.

\begin{ex} \label{Lambda2} There is a connected non-compact \rr-good pseudocompact space whose product with itself is pseudocompact but not \rr-good.  \end{ex}
\begin{proof} The example is the space $\Lambda = \beta \R \setminus (\beta \N\setminus \N)$ mentioned at the end of Section~1. It is \rr-good but $\beta \Lambda \times \beta\Lambda = \beta\R \times \beta \R$ is not \rr-good. On the other hand, $\Lambda\times \Lambda$ is pseudocompact by \cite[Proposition, p.\,203]{W}. Hence, Glicksberg's theorem applies showing that $\beta ( \Lambda \times \Lambda) = \beta \Lambda \times \beta\Lambda$. This is not \rr-good and therefore $\Lambda \times \Lambda $ is not \rr-good either.
\end{proof}

This section ends with some \rr-good products.  Unlike previous examples, the ones to be presented here need not be locally connected or basically disconnected.  A simple lemma, whose proof follows by direct point-wise calculations, will be useful. 

\begin{lem}\label{goodlem}  Let $Y$ be an \rr-good space and $X$ any space. Suppose $f,g \in \C(X\times Y)$. For each $x\in X$, let $f_x, g_x\in \C(Y)$ be given by $f_x(y) = f(x,y)$ and $g_x(y) = g(x,y)$. Set $h_x= f_x\wrr g_x$ and let $h$ be defined by $h(x,y) = h_x(y)$, for all $x,y$. If $h$ is continuous on $X\times Y$ then $h=f\wrr g$.  \end{lem}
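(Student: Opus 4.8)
The plan is to verify that the function $h$ satisfies the two defining properties of the infimum $f\wrr g$ in the \rr-order on $\C(X\times Y)$: that $h$ is a common lower bound of $f$ and $g$, and that $h$ dominates every common lower bound. Both properties reduce, slice by slice, to pointwise identities on $\{x\}\times Y$, where the hypothesis that $Y$ is \rr-good can be brought to bear. The continuity hypothesis on $h$ is exactly what makes $h$ an element of $\C(X\times Y)$, and hence an admissible candidate for the infimum; once that is granted, everything else is a direct pointwise calculation.

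First I would check that $h\rro f$ and $h\rro g$. By definition $h\rro f$ means $h^2=hf$, which evaluated at $(x,y)$ reads $h_x(y)^2=h_x(y)f_x(y)$. Since $h_x=f_x\wrr g_x$ in $\C(Y)$, in particular $h_x\rro f_x$, so $h_x^2=h_xf_x$ and the identity holds for every $y$; as $x$ is arbitrary, $h^2=hf$. The same argument with $g_x$ in place of $f_x$ gives $h^2=hg$. Thus $h$ is a common lower bound of $f$ and $g$.

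Next I would show $h$ is the greatest common lower bound. Suppose $k\in\C(X\times Y)$ satisfies $k\rro f,g$. Restricting to a slice, the identities $k^2=kf$ and $k^2=kg$ become $k_x^2=k_xf_x$ and $k_x^2=k_xg_x$ in $\C(Y)$, that is, $k_x\rro f_x,g_x$. Because $h_x=f_x\wrr g_x$ is the infimum in the \rr-good ring $\C(Y)$, it follows that $k_x\rro h_x$, i.e., $k_x(y)^2=k_x(y)h_x(y)$ for all $y$. Letting $x$ range over $X$ yields $k^2=kh$, so $k\rro h$. Combining this with the previous paragraph gives $h=f\wrr g$.

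Since each step is a routine pointwise computation using only the defining property of the slice infima $h_x=f_x\wrr g_x$, I anticipate no genuine obstacle. The one substantive input is the continuity of $h$, which is assumed in the statement; without it $h$ would fail to lie in $\C(X\times Y)$ and the \rr-relations above would be meaningless. It is worth noting explicitly that the \rr-good hypothesis on $Y$ is used twice: once to guarantee that each $h_x$ exists and is a lower bound of $f_x,g_x$, and once to guarantee that $h_x$ actually dominates the competitor $k_x$.
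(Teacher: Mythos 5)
Your proof is correct and is essentially the paper's own argument: the paper dispatches this lemma with the remark that it ``follows by direct point-wise calculations,'' and your two slice-by-slice computations (that $h$ is a common lower bound, and that any competitor $k$ satisfies $k_x\rro h_x$ on each slice, hence $k\rro h$) are precisely those calculations written out. Your closing observation that the continuity hypothesis is what makes $h$ an admissible element of $\C(X\times Y)$ correctly identifies the only non-pointwise ingredient.
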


Recall that a space $X$ is \emph{weakly Lindel\"of} if for any open cover $\{U_\alpha\}_{\alpha\in A}$, there is a countable subfamily $\{U_{\alpha_n}\}_{n\in \N}$ with $\bigcup_{n\in \N}U_{\alpha_n}$ dense in $X$.  In the following, a result of Comfort, Hindman and Negrepontis (\cite{CHN}) will be crucial.

\begin{thm} \label{LindProp2}   Let $X$ be an arbitrary $P$-space and $Y$ an \rr-good weakly Lindel\"of space. The space $X\times Y$ is \rr-good. \end{thm}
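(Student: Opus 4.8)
The plan is to use Lemma~\ref{goodlem} to reduce the entire statement to a single continuity assertion, and then to prove that continuity by showing that $f$ and $g$ are, in a neighbourhood of each point of $X$, independent of the first coordinate. So fix arbitrary $f,g\in\C(X\times Y)$. Since $Y$ is \rr-good, for each $x\in X$ the infimum $h_x=f_x\wrr g_x$ exists in $\C(Y)$; set $h(x,y)=h_x(y)$. By Lemma~\ref{goodlem} it suffices to prove that $h\in\C(X\times Y)$, for then $h=f\wrr g$, and since $f,g$ were arbitrary, $X\times Y$ is \rr-good. Thus the whole problem collapses to the continuity of the pointwise-defined $h$.

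To get continuity I would establish the following local-constancy claim: for each $x_0\in X$ there is a clopen neighbourhood $N$ of $x_0$ on which both $f$ and $g$ are independent of the first coordinate, i.e. $f(x,y)=f(x_0,y)$ and $g(x,y)=g(x_0,y)$ for all $(x,y)\in N\times Y$. Granting this, for every $x\in N$ we get $f_x=f_{x_0}$ and $g_x=g_{x_0}$, hence $h_x=h_{x_0}$; so on the open set $N\times Y$ the function $h$ agrees with $h_{x_0}\circ\pi_Y$, which is continuous. Since such neighbourhoods cover $X\times Y$ and continuity is a local property, this yields $h\in\C(X\times Y)$. The starting point for the claim is the $P$-space hypothesis on $X$: for fixed $y$ the function $f^y\in\cx$ satisfies $x_0\in\z(f^y-f(x_0,y))$, and in a $P$-space this zero set is clopen, so $f^y$ is constant, equal to $f(x_0,y)$, on a clopen neighbourhood $N_y$ of $x_0$; the same holds for $g$.

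The real work is to pass from these per-slice neighbourhoods $N_y$ (a priori one for each of uncountably many $y$) to a single $N$ valid for all $y$ simultaneously, and this is where both remaining hypotheses enter. Weak Lindel\"ofness of $Y$ lets me replace an uncountable family by a countable subfamily whose union is dense in $Y$; the $P$-space property of $X$ then guarantees that the intersection of the corresponding countably many clopen neighbourhoods of $x_0$ is again a clopen neighbourhood $N$; and density together with the continuity of $f$ propagates the equality $f(x,y)=f(x_0,y)$ from $N\times(\text{dense set})$ to all of $N\times Y$ (using $\ov{N\times D}\spq N\times Y$ when $D$ is dense). The step that makes this genuinely work — producing, for a dense set of $y$, a two-dimensional tube $N'\times U$ of coincidence rather than coincidence along isolated slices — is precisely the product phenomenon for a $P$-space times a weakly Lindel\"of space, and I would invoke \cite{CHN} to supply it.

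The main obstacle is exactly this uniformity/tube step. Per-point local constancy is immediate and costs nothing, and the endgame (intersect clopen sets, then use density and continuity) is routine once the tubes are in hand; the hard part is manufacturing the tubes, that is, trading the mere topological smallness that continuity provides for the exact coincidence that the $P$-space forces, now holding on a neighbourhood in the $Y$-direction as well. This is where weak Lindel\"ofness is indispensable — it must fail in general, since $\betatwo$ is not \rr-good by Theorem~\ref{hager}, a case in which neither factor is a $P$-space — and it is the reason \cite{CHN} is quoted as crucial. Accordingly I would isolate the tube-existence statement as a separate lemma, prove it (or cite it directly from \cite{CHN}), and only then assemble the pieces above.
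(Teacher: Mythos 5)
Your proposal is correct and follows essentially the same route as the paper's proof: reduce everything to the continuity of the pointwise infimum via Lemma~\ref{goodlem}, obtain coincidence tubes from \cite[Lemma~3.2]{CHN}, use weak Lindel\"ofness to extract a countable subfamily of tubes whose $Y$-parts have dense union, intersect the corresponding countably many clopen sets in $X$ (clopen by the $P$-space property) to get a clopen neighbourhood on which $f$ and $g$ are independent of the first coordinate, and propagate the coincidence from the dense set to all of $Y$ by continuity. The paper assembles exactly these pieces, defining the clopen sets $A_p$ and patching $h$ over the open cover $\{A_p\times Y\}_{p\in X}$, just as you describe.
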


\begin{proof} Fix $f,g\in \C(X\times Y)$. By \cite[Lemma~3.2]{CHN}, each point in $X\times Y$ lies in an open set of the form $U\times V$, $U$ open in $X$, $V$ open in $Y$, such that for $x,x'\in U$ and all $y\in V$, $f_x(y) = f_{x'}(y)$ (the notation is as in the lemma). There are such open sets for $g$ as well and, by taking intersections, it may be assumed that these open sets work for both functions. Moreover, since $X$ is a $P$-space, it may also be assumed that $U$ is clopen. An open set $U\times V$ where $U$ is clopen and the \cite{CHN} properties hold for both $f$ and $g$ will be here called a \emph{tile}. 

Fix $p\in X$. For each $y\in Y$ there is a tile $U\times V$ with $(p,y)\in U\times V$.  Hence, the set of tiles $\{U_\beta \times V_\beta\}_{\beta \in B}$, with $p\in U_\beta$, is such that $\bigcup_{\beta \in B} V_\beta = Y$. By the weakly Lindel\"of property, there is a countable subset $\{V_{\beta_n}\}_{n\in \N}$ whose union, $V_p$, is dense in $Y$. Put $A_p=\bigcap_{n\in \N}U_{\beta_n}$. Since $X$ is a $P$-space, $A_p$ is clopen. For all $x\in A_p$, $f_x= f_p$ are equal on the dense open set $V_p$. Hence, $f_x= f_p$ on $Y$. Similarly $g_x= g_p$ on $Y$.  Put $h_p = f_p\wrr g_p$ and notice that $h_p= f_x\wrr g_x$, 
for all $x\in A_p$.

Now consider $p, p'\in X$. If $A_p\cap A_{p'}\ne \vide$, then $h_p= h_{p'}$. Indeed, for $x\in A_p\cap A_{p'}$, $f_x=f_p = f_{p'}$ and $g_x= g_p = g_{p'}$.  Define $h(x,y) = h_p(y)$ whenever $x\in A_p$. This is well-defined and continuous on all the elements of the open cover of $\{A_p\times Y\}_{p\in X}$. Hence, $h = f\wrr g$ by Lemma~\ref{goodlem}.
  \end{proof}

There are many examples of \rr-good Lindel\"of spaces $Y$ which can be used in Proposition~\ref{LindProp2}, for example $\R$. For any non-discrete $P$-space $X$,  $X\times \R$ is \rr-good but neither locally connected nor basically disconnected. An example where the \rr-good space $Y$ is weakly Lindel\"of but not Lindel\"of is $\Lambda$, described at the end of Section~1. Another is found in  \cite[Example~2, p.\ 237]{LR}.  \\[-.5ex]

\setcounter{section}{3}  \setcounter{thm}{0}  
\noindent\tbf{3. A partition of $\R^2$ into two dense subspaces, one \rr-good: this is impossible in $\R$.}

The first thing to note is that if the real line $\R= A\cup B$, with $A\cap B=\vide$ and $A$ and $B$ both dense, then neither $A$ nor $B$ is \rr-good. This is a consequence of the following.
\begin{lem} \label{nogood} \cite[Proposition~3.6]{BR1} Suppose, in a space $X$, there is a sequence $\{D_n\}_{n\in \N}$ of pairwise disjoint clopen sets such that $U=\bigcup_{n\in \N}D_n$ is not closed and there is $x\in \bd U$ (the boundary or frontier) such that every neighbourhood of $x$ meets all but finitely many of the $D_n$.  Then, $X$ is not \rr-good.  \end{lem}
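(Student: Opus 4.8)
The plan is to produce a single pair of functions in $\cx$ with no infimum in the \rr-order, built so that the obstruction is exactly the clustering of the $D_n$ at $x$. Let $e_n$ be the idempotent with $\coz e_n = D_n$, and split the index set by parity, writing $P = \bigcup_{n\text{ even}}D_n$ and $Q = \bigcup_{n\text{ odd}}D_n$, both open subsets of $U$. I would take $f = \mbf 1$ and $g = \mbf 1 + \sum_{n\text{ odd}} \tfrac1n e_n$. The first thing to verify is that $g\in\cx$. Here I would record the easy general fact that a function equal to a constant $a_n$ on each $D_n$ and to $0$ on $X\setminus U$ is continuous exactly when $a_n\to 0$: at a frontier point only the finitely many $D_n$ with $|a_n|\ge\varep$ matter, and their union is a closed set missing that point, so a suitable neighbourhood controls the values. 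Since $\tfrac1n\to 0$, the correction term is continuous and $g\in\cx$; note that $\z(f-g)=\{p: g(p)=1\}= X\setminus Q$.

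Next I would identify all common lower bounds of $f$ and $g$. If $h\rro f=\mbf 1$ then $h^2=h$, so $h$ is an idempotent and $C=\coz h$ is clopen. By Lemma~\ref{fundlem}(2), $\coz h\sbq \z(f-g)\cap\coz f = (X\setminus Q)\cap X = X\setminus Q$; conversely, for any clopen $C\sbq X\setminus Q$ the idempotent $h$ with $\coz h=C$ satisfies $h\rro\mbf 1$ and $h\rro g$ (on $C$ one has $g=1$). Thus the common lower bounds of $f$ and $g$ are precisely the idempotents whose clopen cozero set lies in $E:=X\setminus Q$, and for idempotents $h\rro h'$ iff $\coz h\sbq\coz h'$. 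Consequently $f\wrr g$ exists iff $E$ contains a largest clopen subset.

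Finally I would show no largest clopen subset of $E$ exists, which is where the hypothesis on $x$ enters. The finite unions $C_k = D_2\cup\cdots\cup D_{2k}$ are clopen subsets of $P\sbq E$, hence lower bounds. If $f\wrr g$ existed, its cozero set $C^*$ would be a clopen subset of $E$ with $C_k\sbq C^*$ for all $k$, so $C^*\spq P$, i.e.\ $C^*$ contains every $D_{2n}$. But a clopen set containing infinitely many of the $D_{2n}$ must contain $x$: every neighbourhood of $x$ meets all but finitely many $D_n$, hence meets some $D_{2n}\sbq C^*$, giving $x\in\cl C^* = C^*$. Then $C^*$ is an open neighbourhood of $x$, so by the same hypothesis it meets all but finitely many $D_n$ and in particular meets $Q$, contradicting $C^*\sbq E = X\setminus Q$. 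Therefore $f$ and $g$ have no greatest common lower bound, so $f\wrr g$ does not exist and $X$ is not \rr-good.

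The main obstacle is the last step: turning the clustering condition at $x$ into the nonexistence of a greatest clopen subset of $E$. The crucial leverage is that any candidate infimum, lying below $\mbf 1$, is forced to be an idempotent and hence pinned to a clopen set; since $P$ and $Q$ interleave and both cluster at $x$, no clopen set can swallow all of $P$ without engulfing $x$ and therefore meeting $Q$. Everything else (continuity of $g$ and the description of lower bounds via Lemma~\ref{fundlem}) is routine.
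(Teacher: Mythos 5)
Your proof is correct. Note that the present paper does not actually prove this lemma --- it quotes it from \cite[Proposition~3.6]{BR1} --- so the comparison is with that cited source rather than with a proof in the text; judged on its own, your argument is complete and self-contained. The key device is the choice $f=\mbf 1$: any common lower bound $h$ then satisfies $h^2=h\cdot\mbf 1=h$, so $h$ is an idempotent with clopen cozero set, and Lemma~\ref{fundlem}(2) confines that cozero set to $E=X\setminus Q$; conversely every clopen subset of $E$ arises this way, so the existence of $f\wrr g$ becomes exactly the existence of a largest clopen subset of $E$, which the interleaving of the even- and odd-indexed $D_n$ at the cluster point $x$ rules out. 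This reduction of an order-theoretic question in $\cx$ to a purely combinatorial statement about clopen sets is clean, elementary, and makes transparent where each hypothesis is used (the clustering at $x$ enters only in the final contradiction). Two small quibbles: your ``easy general fact'' is stated as an equivalence (``continuous exactly when $a_n\to 0$''), but necessity of $a_n\to 0$ is false in general (take the $D_n$ to have no cluster point, e.g.\ a discrete space); only sufficiency is true in general, and fortunately that is the only direction you use. Also, you never need the hypothesis that $U$ is not closed separately --- for open $U$ it is implied by the existence of the point $x\in\bd U$ --- which is worth a remark since it shows your argument uses the hypotheses efficiently.
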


To use the lemma in the case of $A$ and $B$ in $\R$, it suffices to take a convergent increasing sequence $\{a_n\}_{n\in\N}$ in, say, $A$ and intersperse it with a sequence from $B$. 
 
 Subsets of $\R^2$ will now be constructed to show a quite different situation in the plane. 
 
\begin{defn} \label{ml}  A line $y = mx+b$ in $\R^2$ is 
called \emph{matched} if $m, b\in \Q$ and $m\ne 0$. The graph of such a 
line is denoted $L_{m,b}$.\end{defn}

\begin{lem} \label{lines}  Consider a matched line 
$L_{m,b}$ in $\R^2$ given by $y = mx+b$, where $m\ne 0$ and $m,b \in 
\Q$.  Then if $(p,q)\in L_{m,b}$, both $p,q \in \Q$ or both are 
irrational. \end{lem} \begin{proof} If $x\in \Q$ then $y =mx+b\in \Q$. 
If $x\notin \Q$ then $y =mx+b \in \Q$ would imply $mx\in \Q$, but $m\in 
\Q$ and $x\notin \Q$, which is impossible. \end{proof}

 It is also useful to note that if $(a,b)$ and $(c,d)$ are such that $a,b,c,d\in \Q$, $a\ne c$, then the line joining these points is a matched line.

\begin{thm} \label{pwc}  Consider the following two subsets of $\R^2$: $$B=\bigcup_{m,b\in \Q, m\ne 0}L_{m,b}\;\text{and}\;A= \R^2\setminus B\;.$$ Then,

(1) $B$ is dense in $\R^2$,  locally connected and, hence, \rr-good.

(2) $A$ is dense in $\R^2$ and has a basis of clopen sets.  It is not \rr-good. \end{thm}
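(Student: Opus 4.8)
The plan is to prove the two parts of Theorem~\ref{pwc} somewhat separately, using the structural results already established.

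\textbf{Part (1).} First I would show $B$ is dense. Given any open ball in $\R^2$, I can find two rational points $(a_1,b_1),(a_2,b_2)$ inside it with $a_1\ne a_2$; by the remark following Lemma~\ref{lines} the line through them is matched, so it lies in $B$ and meets the ball. In fact the rational points of the ball already lie on matched lines through each other, so $B$ contains a dense set and is dense. The real content is local connectedness. I would argue that $B$ is a union of lines, each of which is connected, and that through any point of $B$ the matched lines are plentiful enough to provide arbitrarily small connected neighbourhoods \emph{within} $B$. The natural approach is to fix $(p,q)\in B$ and a small ball, and to exhibit a connected open-in-$B$ set containing $(p,q)$ inside that ball: one can take a suitable bundle or fan of matched-line segments through a nearby rational pivot point, whose union is connected (they share the pivot or overlap) and whose intersection with the ball is open in $B$. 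Once local connectedness is in hand, Examples~\ref{basicexs}(1) gives that $B$ is \rr-good.

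\textbf{Part (2).} Density of $A$ follows the same way as for $B$, or from the fact that $A$ is the complement of a countable union of lines (a countable union of nowhere dense sets, hence meagre) so its complement $B$ is meagre and $A$ is dense (indeed residual). For the clopen basis, I would use Lemma~\ref{lines}: a point $(p,q)\in A$ has, by that lemma, $p$ and $q$ not both rational and not both irrational --- that is, exactly one of $p,q$ is rational, \emph{and} in fact $(p,q)\notin A$ precisely when it lies on some matched line. I would show that for $(p,q)\in A$ one can choose a small axis-parallel rectangle $I\times J$ whose corners avoid $B$ and whose boundary lies entirely in $A$, making $(I\times J)\cap A$ clopen in $A$; shrinking $I,J$ to have irrational endpoints (chosen to dodge the countably many matched lines) arranges that no matched line crosses the boundary, so the rectangle trace is both open and closed in $A$. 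This yields a clopen basis. Finally, to show $A$ is \emph{not} \rr-good I would invoke Lemma~\ref{nogood}: I need a sequence of pairwise disjoint clopen subsets of $A$ whose union is not closed, accumulating at a single boundary point each of whose neighbourhoods meets all but finitely many of them. Picking a point of $B$ (hence not in $A$) that is a limit of such a clopen family --- for instance clustering small clopen rectangles in $A$ toward a point lying on a matched line --- supplies exactly the configuration Lemma~\ref{nogood} forbids.

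The main obstacle I expect is the local-connectedness argument in Part~(1): it is not enough that $B$ is a union of connected lines, since a union of lines can be badly disconnected locally at points where few lines pass. The crux is to verify that through (or near) \emph{every} point of $B$ there is a genuinely connected small neighbourhood in the subspace topology, which requires controlling how matched lines through rational pivots sweep out small balls. A secondary delicate point is the endpoint-dodging in Part~(2): one must choose rectangle sides avoiding a countable set of matched lines so the boundary stays inside $A$, which is a routine but careful density/countability argument. The non-\rr-good conclusion via Lemma~\ref{nogood} should then be the most mechanical step once the clopen building blocks are available.
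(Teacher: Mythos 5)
Your overall skeleton (density, then local connectedness plus Examples~\ref{basicexs}(1) for $B$; density, a clopen basis, then Lemma~\ref{nogood} for $A$) matches the paper's, but each of the three technical steps you sketch breaks down. For part (2), the axis-parallel rectangle construction cannot work. A horizontal segment $[x_1,x_2]\times\{y_0\}$ meets $B$ in a dense subset of itself: already the matched lines $y=x+b$, $b\in\Q$, hit it in the dense set of points $(y_0-b,\,y_0)$. So no segment of positive length lies in $A$, and ``no matched line crosses the boundary'' is unachievable (alternatively: any matched line through a rational point interior to the rectangle is connected and unbounded, hence must cross the boundary). Moreover your goal is inverted: for $(I\times J)\cap A$ to be clopen in $A$ one needs the boundary of the rectangle to be \emph{disjoint} from $A$, i.e., contained in $B$; if the boundary lay in $A$, its points would be limits of $A\cap (I\times J)$ (as $A$ is dense), so the trace would fail to be closed. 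The paper does exactly the opposite of what you propose: it takes quadrilaterals \emph{bounded by matched lines}, so the boundary lies wholly in $B$ and the open interior traced on $A$ is clopen in $A$. (Also, your parenthetical characterization of $A$ is wrong: $A$ contains points with both coordinates irrational, e.g.\ $(\sqrt{2},\sqrt{3})$; Lemma~\ref{lines} only yields that mixed rational/irrational points lie in $A$.) Separately, your application of Lemma~\ref{nogood} fails: the frontier point $x\in \bd U$ there is computed \emph{in the space} $X=A$, so it must be a point of $A$. If your disjoint clopen pieces shrink toward a point of $B$, their union is closed in $A$ and the lemma says nothing. One must cluster the clopen pieces at a point of $A$ itself (possible since $A$, being dense in $\R^2$, has no isolated points), which is what the paper means by ``$A$ has convergent sequences.''

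For part (1), the ``fan of matched segments through a nearby rational pivot'' is connected but is neither open in $B$ nor even a neighbourhood in $B$ of its non-pivot points: arbitrarily close to any point of the fan there are points of $B$ lying on matched lines that miss the pivot (for instance, on the parallel matched line with slightly perturbed rational intercept), so no trace $B\cap D$, $D$ a disk, is contained in the fan. The paper sidesteps this by proving that $B\cap C$ is \emph{connected} for every open disk $C$: every nonempty relatively open subset of $B\cap C$ contains a point with both coordinates rational (all such points lie in $B$), and two such points with distinct abscissae are joined, inside the convex set $C$, by a segment of a matched line lying in $B\cap C$; hence $B\cap C$ admits no separation, and these connected open traces form a neighbourhood base. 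You correctly flagged local connectedness as the crux, but the mechanism you propose does not deliver it.
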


\begin{proof} (1) Since any open set in $\R^2$ contains points where both coordinates are rational, $B$ is dense in $\R^2$. Notice that $B$ also contains points $(a,b)$ where both $a$ and $b$ are irrational, but not all such points. 

Consider a point $(a,b)$ in $B$ and an open disk $C$ with centre $(a,b)$. Suppose that $U$ and $V$ are open sets of $\R^2$ such that $U\cup V\spq C\cap B$, $U\cap V \cap C \cap B= \vide$, $U\cap C\cap B\ne \vide$ and $V\cap C\cap B\ne \vide$. In other words assume that there is 
a partition of $C\cap B$.  Choose points $(p,q)\in U\cap C\cap B$ and $(u,v)\in V\cap C\cap B$, $p,q,u,v\in \Q$, $u\ne p$. The line segment joining these two points will lie in $C\cap B$ but this line segment is connected in $\R^2$, which is impossible. Hence, $B$ is locally connected and, hence, \rr-good by \cite[Theorem~3.5(1)]{BR1}. (It can be seen that $B$ is even arcwise connected.) 

(2) The set $A$ contains all points $(a,b)$ where one coordinate is rational and the other irrational, as well as some points where both coordinates are irrational. This shows that $A$ is dense in $\R^2$.  Moreover, for any $(a,b)\in A$ and any open disk $C$ with centre $(a,b)$ 
there is a quadrilateral inside $C$ containing $(a,b)$ bounded by matched lines. The interior of such a quadrilateral, intersected with $A$, is a clopen set in $A$.

Since $A$ has a basis of clopen sets and has convergent sequences, it is not \rr-good by Lemma~\ref{nogood}. \end{proof}

There are similar constructions in $\R^n$, $n>2$. \\[-.5ex]

\setcounter{section}{4}\setcounter{thm}{0}
\noindent\tbf{4. Some separation properties and $\cx$.}

Two sorts of reduced rings will make an appearance in this section. The definitions are recalled here and, in the case of $\cx$, the corresponding topological notions will follow. 

\begin{defn} \label{wBawB} (1)~A ring $R$ is called \emph{weakly Baer} or \emph{wB} if, for each $r\in R$, $\ann r$ is generated by an idempotent $e=e^2$.  (2)~A ring $R$ is called \emph{almost weakly Baer} or \emph{awB} if, for each $r\in R$, $\ann r$ is generated by a set of idempotents. \end{defn} 

In the literature the names ``pp-ring'' and ``almost pp-ring'' are also used for wB and awB rings, respectively. 

The first thing to note is the following. 

\begin{lem} \label{awBvswB}  \cite[Theorem~2.6]{BR1} An awB ring is \rr-good if and only if it is wB. \end{lem}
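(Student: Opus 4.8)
The plan is to prove the two implications directly, using throughout two standard features of a reduced ring $R$: every idempotent is central, and for each $x\in R$ the annihilator $\ann x$ is a two-sided ideal with $\{y : yx=0\}=\{y : xy=0\}=\ann x$. I would also record the reformulation that $r\rro s$ holds exactly when $r(r-s)=0$, so that $h\rro r,s$ forces $h(r-s)=0$, i.e. $h\in\ann(r-s)$; this is the bridge between the order and the annihilators. Note first that wB trivially implies awB, so it suffices to show that an awB ring is \rr-good precisely when it is wB.

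For the implication wB $\Rightarrow$ \rr-good (which in fact needs no awB hypothesis), fix $r,s\in R$ and let $e=e^2$ generate $\ann(r-s)=eR$. Since $e(r-s)=0$ gives $er=es$, I would set $t=er=es$ and claim $t=r\wrr s$. That $t\rro r$ and $t\rro s$ follows from a one-line computation using centrality of $e$ (for instance $t^2=er^2=tr$, and symmetrically for $s$). For any lower bound $h\rro r,s$ one has $h\in\ann(r-s)=eR$, hence $eh=he=h$, so $ht=h(er)=(he)r=hr=h^2$, giving $h\rro t$. Thus $t$ is the infimum and $R$ is \rr-good.

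For the converse, assume $R$ is awB and \rr-good, and produce for each $r$ a single idempotent generating $\ann r$. The candidate is $e:=1\wrr(1-r)$, which exists by \rr-goodness. Because $e\rro 1$, $e$ is idempotent; because $e\rro 1-r$ one gets $er=0$, so $eR\sbq\ann r$. The key point is that $e$ dominates every idempotent of $\ann r$: if $f=f^2$ with $fr=0$, then $f\rro 1$ and $f\rro 1-r$ (the latter since $f(1-r)=f=f^2$), whence $f\rro e$, and both being idempotent this yields $f=fe\in eR$. Now invoke awB: $\ann r$ is generated as an ideal by idempotents, each of which lies in the two-sided ideal $eR$ (two-sided since $e$ is central); therefore $\ann r\sbq eR$, giving $\ann r=eR$ and hence $R$ is wB.

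The main obstacle is exactly the final step of the converse. \rr-goodness alone only delivers the largest idempotent $e$ lying in $\ann r$; without a further hypothesis $\ann r$ could properly contain $eR$, since the largest idempotent annihilator need not generate the whole annihilator. It is the awB hypothesis, that $\ann r$ is generated by its idempotents, that forces $\ann r=eR$, and this is the crux of why the equivalence is asserted only within the class of awB rings. I would take particular care to justify that $eR$ is a two-sided ideal, so that ``generated by idempotents contained in $eR$'' genuinely yields the containment $\ann r\sbq eR$.
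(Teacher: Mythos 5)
Your proof is correct, and there is in fact nothing in the paper to compare it against: the paper states this lemma purely as a quotation of \cite[Theorem~2.6]{BR1} and gives no proof, so your argument serves as a complete, self-contained substitute. Both halves check out. For wB $\Rightarrow$ \rr-good you correctly use that idempotents in a reduced ring are central and that $h\rro r,s$ forces $h\in\ann(r-s)$: with $\ann(r-s)=eR$ the element $t=er=es$ is a lower bound, and any lower bound $h$ satisfies $he=h$, hence $h^2=hr=ht$; as you note, this direction needs no awB hypothesis at all. For the converse, the idempotent $e=\mbf{1}\wrr(\mbf{1}-r)$ annihilates $r$ and \rro-dominates (hence absorbs) every idempotent of $\ann r$, and awB is exactly what lets you conclude $\ann r\sbq eR$, since the generating idempotents all lie in the two-sided ideal $eR$; your care about $eR$ being two-sided (via centrality of $e$) is warranted and correctly handled. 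It is worth observing that this device is precisely the one the paper itself uses in the proof of Proposition~\ref{0-dim}, where $h=\mbf{1}\wrr(\mbf{1}-f)$ is shown to be the largest idempotent whose cozero set lies in $\z(f)$; so your argument is entirely in the spirit of the paper's own toolkit, merely carried out at the level of abstract reduced rings rather than rings of the form $\cx$.
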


Not all awB rings are wB. 
\begin{ex} \label{NRex} \cite[Example~3.2]{NR} The ring $\C(\beta\N\setminus \N)$ is awB but not wB. \end{ex}
 
Even though awB rings need not be \rr-good, a topological description of them nicely parallels that for wB rings of the form $\cx$, and is given here. 
 
The equivalence of the first two statements in the following is mentioned in \cite{NR} but is also proved here.

\begin{prop} \label{wBchar}  The following three statements about a space $X$ are equivalent. (1)~$X$ is basically disconnected; (2)~$\cx$ is a wB ring; and (3)~if $U$ is a cozero set and $V$ an open set with $U\cap V=\vide$ then $U$ and $V$ can be separated by a clopen set.  \end{prop}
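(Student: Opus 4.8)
The plan is to channel both equivalences through the topological reading of the annihilator. I would use the standard dictionary for $\cx$: an idempotent $e=e^2\in\cx$ is precisely the characteristic function of a clopen set $E=\coz e$, and for such an $e$ the principal ideal $e\cx$ equals $\{g\in\cx:\coz g\sbq E\}$. For an arbitrary $f$, since $fg=\bz$ is equivalent to $\coz f\cap\coz g=\vide$ and $\coz g$ is open, one has $\ann f=\{g\in\cx:\coz g\sbq\inte\,\z(f)\}$, where $\inte\,\z(f)=X\setminus\cl(\coz f)$. Throughout I would use that $X$ is Tychonoff, so the cozero sets form a base, together with the fact that $X$ is basically disconnected precisely when the closure of every cozero set is open, i.e.\ when $\cl(\coz f)$ --- equivalently $\inte\,\z(f)$ --- is clopen for each $f\in\cx$.

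For (1)$\Leftrightarrow$(2) I would argue as follows. Suppose first $\ann f=e\cx$ for an idempotent $e$ with clopen $E=\coz e$. From $e\in\ann f$ I get $E\sbq\inte\,\z(f)$. Conversely, any cozero set $C\sbq\inte\,\z(f)$ is the cozero set of some $g\in\ann f$, so $C=\coz g\sbq E$; since the cozero sets form a base and $\inte\,\z(f)$ is open, taking the union of all such $C$ gives $\inte\,\z(f)\sbq E$. Hence $E=\inte\,\z(f)$, which is therefore clopen, so $\cl(\coz f)$ is clopen; as $f$ was arbitrary, $X$ is basically disconnected. For the reverse, if $X$ is basically disconnected then $\inte\,\z(f)$ is clopen; letting $e$ be its characteristic function, $e\in\ann f$ and every $g\in\ann f$ satisfies $\coz g\sbq\inte\,\z(f)=\coz e$, whence $g=eg\in e\cx$, so $\ann f=e\cx$.

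For (1)$\Leftrightarrow$(3) I would take the separating clopen set to be a closure. Assuming (1), let $U=\coz f$ and let $V$ be open with $U\cap V=\vide$. Then $V\sbq\z(f)$, and since $V$ is open, $V\sbq\inte\,\z(f)=X\setminus\cl(U)$. Putting $W=\cl(U)$, which is clopen by (1), gives $U\sbq W$ and $W\cap V=\vide$, so $W$ separates $U$ and $V$. Conversely, assuming (3), fix $f$ and take $U=\coz f$, $V=\inte\,\z(f)=X\setminus\cl(\coz f)$; these are disjoint, so (3) yields a clopen $W$ with $U\sbq W$ and $W\cap V=\vide$. The latter gives $W\sbq\cl(\coz f)$, while $U\sbq W$ with $W$ closed gives $\cl(\coz f)\sbq W$; hence $\cl(\coz f)=W$ is clopen, which is (1).

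The only step I expect to need care is the forward half of (1)$\Leftrightarrow$(2): recognizing that the generating idempotent cannot be an arbitrary clopen subset of $\inte\,\z(f)$ but is forced to exhaust it. This is exactly where the Tychonoff hypothesis enters, through the fact that $\inte\,\z(f)$, being open, is the union of the cozero sets it contains, each of which already lies in $\ann f$ and hence inside the generator's cozero set. The remaining implications are direct once the annihilator is rewritten as $\{g:\coz g\sbq\inte\,\z(f)\}$ and the separating clopen set is chosen to be $\cl(\coz f)$.
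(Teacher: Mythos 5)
Your proof is correct, and it rests on exactly the same dictionary as the paper's: idempotents of $\cx$ correspond to clopen sets, and $\ann f$ is identified with $\{g\in\cx : \coz g\sbq X\setminus \cl(\coz f)\}$ via the fact that cozero sets form a base. The difference is purely organizational. The paper proves the cycle (1)$\Rightarrow$(2)$\Rightarrow$(3)$\Rightarrow$(1), whereas you prove the two equivalences (1)$\Leftrightarrow$(2) and (1)$\Leftrightarrow$(3) with (1) as the hub. Your (1)$\Rightarrow$(2) and your (3)$\Rightarrow$(1) are in substance the paper's own arguments (the generator is the characteristic function of $X\setminus\cl(\coz f)$; the separating clopen set is forced to equal $\cl(\coz f)$). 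What you prove directly and the paper obtains only by composition is (2)$\Rightarrow$(1): your observation that the generator's cozero set must exhaust $\inte\,\z(f)$, because every cozero set inside $\inte\,\z(f)$ already lies in $\ann f$, is precisely the base argument the paper deploys inside its (2)$\Rightarrow$(3) step (where it shows the open set $V$ is contained in $\coz e$). So the two proofs have identical mathematical content, merely routed through different implications; your arrangement has the small expository advantage of making the ``generator exhausts the interior of the zero set'' point explicit, which the paper leaves implicit.
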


\begin{proof} (1) $\Rightarrow$ (2): Consider $f\in \cx$ and let $D= X\setminus (\cl(\coz f))$, a clopen set, and $e=e^2\in\cx$ such that $\coz e=D$. For any $g\in \ann f$, $ge =g$ and $fe=0$. Hence, $\ann f= e\cx$. (2) $\Rightarrow $ (3): Let $U=\coz f$ and $V$ be open with $U\cap V=\vide$. Since $\ann f= e\cx$ for some $e=e^2$, the clopen set $D = \coz e$ is such that $\coz f = U\sbq X\setminus D$. For every $g\in \cx$ with $\coz g\sbq V$, $fg=\mbf{0}$ implying that $\coz g\sbq D$. Thus, the clopen set $D$ separates $U$ and $V$.
(3) $\Rightarrow$ (1): If $U =
\coz f$, put $V= \inte (X\setminus U)$. There is a clopen set $D$ with $\coz f\sbq D$ and $V\sbq X\setminus D$. It follows that $\cl \,U= D$.  \end{proof}

The equivalence of (1) and (2) in the next result was obtained in \cite[Theorem~2.4]{AE}, but the proof here is more direct. $U$-spaces were introduced in \cite{GH}; they are spaces $X$ such that, for each $f\in \cx$, there is a unit $u\in \cx$ with $f= |f|u$. 

\begin{prop} \label{awBchar}  The following statements for a space $X$ are equivalent. (1)~$X$ is a $U$-space; (2)~$\cx$ is an awB ring; and (3)~if $U$ and $V$ are cozero sets with $U\cap V=\vide$ then $U$ and $V$ can be separated by a clopen set.
\end{prop}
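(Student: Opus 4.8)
The plan is to prove the three statements about a space $X$ are equivalent by establishing the cycle $(1)\Rightarrow(3)\Rightarrow(2)\Rightarrow(1)$, closely paralleling the structure of the proof of Proposition~\ref{wBchar}, with the key difference being that the topological separation condition (3) now involves \emph{two} cozero sets rather than one cozero set and one arbitrary open set. This weakening of the hypothesis on $V$ is exactly what distinguishes the awB/$U$-space situation from the wB/basically disconnected one, and it must be respected at each step.

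First, for $(1)\Rightarrow(3)$, I would start with two disjoint cozero sets $U=\coz f$ and $V=\coz g$. The $U$-space hypothesis gives, for the function $f$, a unit $u$ with $f=|f|u$; the intended payoff is that $U$-spaces are precisely those in which $\ann f$ is generated by idempotents, so the annihilator of $f$ contains enough idempotents to separate $U$ from any cozero set disjoint from it. Concretely, since $g\cdot f=\mbf 0$ (their cozero sets are disjoint), $g\in\ann f$, and one extracts from the idempotent generators an idempotent $e$ with $\coz g\sbq\coz e$ while $\coz f\cap\coz e=\vide$; the clopen set $D=\coz e$ then separates $U$ and $V$. A clean route is to pass through the equivalence of (1) and (2) for this half, using that the idempotents in $\ann f$ have cozero sets that are clopen subsets of $\z(f)$, and that $\coz g\sbq\z(f)$ together with the generation-by-idempotents property forces $\coz g$ into the union of such clopen sets.

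Next, $(3)\Rightarrow(2)$ should follow the template of $(2)\Rightarrow(3)$ in Proposition~\ref{wBchar} run in reverse: given $f\in\cx$, I would show $\ann f$ is generated by idempotents by checking that every $g\in\ann f$ lies in the ideal generated by the idempotents sitting inside $\z(f)$. For such a $g$, the cozero set $\coz g$ is disjoint from $\coz f$ (since $fg=\mbf 0$ gives $\coz g\sbq\z(f)$, hence $\coz f\cap\coz g=\vide$ as both are cozero), so (3) furnishes a clopen $D$ with $\coz g\sbq D$ and $\coz f\sbq X\setminus D$; the idempotent $e$ with $\coz e=D$ satisfies $e\in\ann f$ and $eg=g$. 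Ranging over all $g\in\ann f$ produces a generating set of idempotents for $\ann f$, which is precisely the awB condition. Finally, $(2)\Rightarrow(1)$ would recover the $U$-space property from awB: given $f$, one uses the idempotents generating $\ann f$ to build the clopen decomposition of $X$ on which a unit $u$ realizing $f=|f|u$ can be defined; the natural candidate is $u=\operatorname{sgn}(f)$ suitably adjusted on $\z(f)$ so as to be a continuous unit, and the idempotent generators of $\ann f$ guarantee the continuity and invertibility of this adjustment.

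The main obstacle I expect is in the steps involving two cozero sets where, in the basically disconnected case, one had the luxury of an arbitrary open set $V$ and could take $V=\inte(X\setminus U)$ to force $\cl U$ itself to be clopen. Here that move is unavailable: awB spaces need not have $\cl(\coz f)$ clopen (indeed Example~\ref{NRex} exhibits an awB ring that is not wB, so the two notions genuinely differ), and the separating clopen set $D$ need not equal $\cl U$. The delicate point is therefore in $(2)\Rightarrow(1)$, constructing the global unit $u$: one cannot simply take a single idempotent, but must assemble $u$ from the (possibly infinite) family of idempotent generators of $\ann f$, and verify that the resulting $u=|f|\inv f$ extended across $\z(f)$ is genuinely a \emph{continuous} unit in $\cx$. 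Managing the behaviour on the boundary of $\coz f$, where no single idempotent need be available, is where the argument requires care, and it is exactly the feature that makes $U$-spaces strictly more general than basically disconnected spaces.
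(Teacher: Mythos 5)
Your cycle $(1)\Rightarrow(3)\Rightarrow(2)\Rightarrow(1)$ has a genuine circularity at its first step. To prove $(1)\Rightarrow(3)$ you propose to "extract from the idempotent generators" of $\ann f$ a separating idempotent, and you say explicitly that the clean route is to pass through the equivalence of (1) and (2). But $(1)\Rightarrow(2)$ is proved nowhere in your proposal: in your cycle, (2) is only reachable from (1) via (3), which is exactly the implication you are trying to prove. The implication out of the $U$-space property must be established directly, and the key device is absent from your sketch: applying the $U$-space hypothesis to $f$ alone (getting $f=|f|u$) says nothing about $g$, because $u$ is unconstrained off $\coz f$. What works (and is what the paper does in its $(1)\Rightarrow(2)$ step) is to apply the hypothesis to a single function built from \emph{both} functions: with $k=-|f|$ and $l=|g|$, write $k+l=|k+l|u$; since $\coz f\cap\coz g=\vide$, the unit $u$ is forced to equal $-1$ on $\coz f$ and $+1$ on $\coz g$, and since a unit of $\cx$ never vanishes, $D=\{x\in X: u(x)>0\}$ is clopen and separates the two cozero sets. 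Without this trick, or some substitute for it, your step $(1)\Rightarrow(3)$ does not exist.

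Your $(3)\Rightarrow(2)$ is correct, and is essentially the paper's $(2)\Rightarrow(3)$ run backwards: for $g\in\ann f$ the disjoint cozero sets $\coz f$, $\coz g$ yield a clopen $D$, and the idempotent $e$ with $\coz e=D$ satisfies $fe=\mbf 0$ and $eg=g$. But your $(2)\Rightarrow(1)$ is also not yet an argument, and as sketched it aims at the wrong annihilator. The sign of $f$ must be corrected across the interface between $\coz f^+$ and $\coz f^-$, so the idempotents you need live in $\ann f^+$ (which contains $f^-$), not in $\ann f$: idempotents annihilating $f$ vanish on all of $\coz f$ and cannot separate its positive and negative parts. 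Moreover, the difficulty you flag --- assembling $u$ from a "possibly infinite" family of idempotents and controlling behaviour on $\bd \coz f$ --- dissolves once generation is used correctly: $f^-\in\ann f^+$ means $f^-=\sum_{i=1}^k e_il_i$ is a \emph{finite} sum, so the single idempotent $e$ with $\coz e=\bigcup_{i=1}^k\coz e_i$ satisfies $f^+e=\mbf 0$ and $f^-e=f^-$, and then $u=\mbf 1-2e$ is automatically a continuous unit with $f=|f|u$. This is exactly how the paper handles it, factoring the step as $(2)\Rightarrow(3)$ followed by $(3)\Rightarrow(1)$ applied to the pair $f^+,f^-$. In short: your middle implication is sound, but both implications touching (1) are missing their essential constructions.
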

\begin{proof} (1) $\Rightarrow$ (2): Let $\mbf{0}\ne f,g\in \cx$ with $fg = \mbf{0}$.  Replace $f$ by $k = -|f|$ and $g$ by $l=|g|$; the cozero sets do not change.  There is a unit $u$ such that $k+ l = |k+l|u= (-k+l)u$. Hence, for $x\in \coz k$, $u(x) = -1$ and for $x\in \coz l$, $u(x) =1$. Since $u$ is a unit, there is a clopen set $D$ such that for $x\in D$, $u(x)>0$ and for $x\notin D$, $u(x)<0$. From this, $\coz k = \coz f\sbq X\setminus D$ and $\coz l= \coz g\sbq D$. Put $e=e^2$ with $\coz e = D$. Then, $fe =\mbf{0}$ and $g=eg$, showing that $\ann f$ is generated by idempotents.

 (2) $\Rightarrow$ (3): Let $U = \coz f$ and $V= \coz g$ be such that $U\cap V= \vide$. The product $fg=\mbf{0}$. Since $\cx$ is awB there are $e_i = e_i^2$ and $l_i\in \cx$, $i = 1,\ldots, k$, with each $e_i$ such that $fe_i=\mbf{0}$ and $g= \sum_{i=1}^k e_il_i$. Since $D=\bigcup_{i=1}^k \coz e_i$ is clopen, there is $e=e^2$ with $\coz e=D$. From this, $fe=\mbf{0}$ and $g=ge$.   The clopen set $\coz e$ separates $U$ and $V$. 

(3) $\Rightarrow$ (1): It must be shown that for any $f\in \cx$ there is a unit $u$ with $f= |f|u$. If $f$ does not change sign in $\coz f$, the unit can be $\pm\mbf{1}$. Otherwise,  let $f^+$ be defined by $f^+(x) = f(x)$ if $x\in \coz f$ and $f(x)>0$, $f^+(x) = 0$ for other $x$. Similarly, $f^-$ is defined. Since $\coz f^+ \cap \coz f^- = \vide$, there is a clopen set $D$ with $\coz f^+\sbq D$ and $\coz f^-\sbq X\setminus D$. Let $e=e^2$ be such that $\coz e=D$ and $u= e -(\mbf{1}-e)$, a unit. From this, $f = f^+ + f^- = f^+u - f^-u = |f|u$. 
\end{proof}

\setcounter{section}{5}  \setcounter{thm}{0}  

\noindent\tbf{5. A sufficient but not a necessary condition for \rr-good.}

We begin by recalling the definition of the B-property from 
\cite[Definition 3.3]{BR1}. It is a sufficient condition for a space to be \rr-good (\cite[Corollary~3.4]{BR1}).  It is implied by local connectedness.  However, it is known not to be a necessary condition; a topic expanded upon here.  

\begin{defn}\label{b-prop}  In a space $X$ let $\{U_\alpha\}_{\alpha \in A}$ be any family of non-empty cozero sets in $X$ with the following property: for $\alpha \ne \beta$ in $A$, $(\bd U_\alpha) \cap U_\beta = \vide$. The space $X$ is said to satisfy the 
\emph{B-property} (for \emph{boundary property}) if the following holds for each such family of cozero sets. Let $z\in \bd(\bigcup_{\alpha\in A}U_\alpha)$. For every neighbourhood $N$ of $z$ there is $\beta \in A$ such that $N\cap \bd U_\beta \ne \vide$. \end{defn}

The motivation for this definition is as follows: Suppose in $\cx$ that, for $f,g\in \cx$, there are non-zero \rr-lower bounds $\{h_\alpha\}_{\alpha \in A}$ for $f$ and $g$.  Then, by Lemma~\ref{fundlem}, the set $\{\coz h_\alpha\}_{\alpha \in A}$ satisfies the demands of Definition~\ref{b-prop}.

The purpose here is to find \emph{connected} \rr-good spaces without the B-property.  Before doing that, the next proposition shows that, at the other extreme, it is easy to find basically disconnected spaces without the B-property.  

\begin{prop} \label{pibase}  If $X$ is a space that has the B-property then each union of clopen sets is clopen. If, in addition, $X$ has a clopen $\pi$-base, it is discrete.   \end{prop}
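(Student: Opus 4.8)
The plan is to prove the two assertions in turn, with the first feeding directly into the second.

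For the first assertion I would begin from the elementary observation that a clopen set $D$ is the cozero set of its indicator function and has empty boundary, $\bd D=\vide$. So any family $\{D_\alpha\}_{\alpha\in A}$ of clopen sets (we may discard the empty ones and assume each is non-empty, which does not change the union) is automatically a family of \emph{non-empty cozero sets} meeting the hypothesis of Definition~\ref{b-prop}: the requirement $(\bd D_\alpha)\cap D_\beta=\vide$ holds vacuously because every $\bd D_\alpha$ is empty. I would then apply the B-property to this family. If some $z$ were to lie in $\bd\bigl(\bigcup_{\alpha\in A}D_\alpha\bigr)$, the B-property, applied with the neighbourhood $N=X$ of $z$, would furnish a $\beta$ with $N\cap\bd D_\beta\ne\vide$; but $\bd D_\beta=\vide$, a contradiction. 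Hence $\bd\bigl(\bigcup_\alpha D_\alpha\bigr)=\vide$, and since the union is open this empty boundary forces it to be closed as well, i.e.\ clopen.

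For the second assertion, suppose in addition that $X$ has a clopen $\pi$-base. Fix an arbitrary open set $W$ and let $U$ be the union of all clopen subsets of $W$; by the first part $U$ is clopen, and clearly $U\sbq W$. The clopen $\pi$-base guarantees that every non-empty open subset of $W$ contains a clopen set contained in $W$, hence contained in $U$, so that every non-empty open subset of $W$ meets $U$; that is, $U$ is dense in $W$. Being clopen, $U$ is closed, so density gives $W\sbq\cl U=U$, whence $U=W$. Thus \emph{every} open set of $X$ is clopen. Finally, since $X$ is Tychonoff and hence $T_1$, for each point $x$ the set $X\setminus\{x\}$ is open, therefore clopen, so $\{x\}$ is open. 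Hence $X$ is discrete.

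The routine parts are the topological bookkeeping: clopen implies cozero with empty boundary, and the density computation for $U$. The step carrying the real content, and the one I would be most careful with, is the use of the B-property in the first assertion: the entire mechanism is that for a family of clopen sets the \emph{hypothesis} of Definition~\ref{b-prop} becomes vacuous while its \emph{conclusion} becomes unsatisfiable, so the only way to escape a contradiction is for the union to have no boundary point at all. After that, the passage from ``unions of clopen sets are clopen'' together with a clopen $\pi$-base to ``every open set is clopen'' via the density argument is the key maneuver of the second assertion, with the $T_1$ separation supplied by the Tychonoff hypothesis providing the final jump to discreteness.
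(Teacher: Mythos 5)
Your proof is correct and follows essentially the same route as the paper: for the first assertion, a family of clopen sets satisfies the hypothesis of Definition~\ref{b-prop} vacuously while the conclusion fails since each $\bd D_\alpha=\vide$, forcing $\bd\bigl(\bigcup_\alpha D_\alpha\bigr)=\vide$; for the second, the union of all clopen subsets of an open set $W$ is clopen and dense in $W$, hence equals $W$. Your write-up is in fact slightly more complete than the paper's, which leaves implicit both the density argument and the final $T_1$ step from ``every open set is clopen'' to discreteness.
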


\begin{proof} Any set $\{U_\alpha\}_{\alpha \in A}$ of clopen sets satisfies the conditions of Definition~\ref{b-prop}.  Set $U=\bigcup_{\alpha \in A}U_\alpha$. If $x\in \bd U$, any neighbourhood of $x$ would meet $\bd U_\alpha$, for some $\alpha$.  However, $\bd U_\alpha = \vide$ and, hence, $\bd U=\vide$. For the second part, any open $V$ in $X$ has a union of clopen sets dense in it.  From the first part, $V$ is clopen.   \end{proof}

Any basically disconnected space $X$ which is not discrete is \rr-good and does not have the B-property. 

The next proposition is the key tool for the construction of connected examples. 

\begin{prop} \label{beta}   Let $X$ be a space which is not compact. Let $\{U_\alpha\}_{\alpha\in A}$ be an infinite family of pairwise disjoint non-empty cozero sets of $X$.  For $\alpha \in A$, let $f_\alpha$ be such that $f_\alpha \in C(X)$ such that $\coz f_\alpha = U_\alpha$, for all $x\in X$, $0\le f_\alpha(x) \le 1$ and for some $k_\alpha \in U_\alpha$, $f_\alpha(k_\alpha) = 1$. Assume that these data also satisfy the following properties:

(i) for $\alpha \ne \beta$, $(\bd_XU_\beta) \cap U_\alpha =\vide$,

(ii) $K_\alpha =\cl_XU_\alpha$ is compact for all $\alpha \in A$,

(iii) the function $f$ defined by $f(x) = f_\alpha(x)$ for $x\in U_\alpha$ and $f(x) =0$ if $x\notin U=\bigcup_AU_\alpha$ is continuous on $X$,

(iv) $K=f\inv (\{1\})$ is not compact in $X$.\\ 
Then, $\beta X$ does not have the B-property.  
\end{prop}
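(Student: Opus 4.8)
The plan is to refute the B-property for $\beta X$ straight from Definition~\ref{b-prop}: I will produce one family of cozero sets in $\beta X$ satisfying the disjointness hypothesis, a point $z$ in the boundary of their union, and a neighbourhood $N$ of $z$ missing every boundary $\bd U_\beta$. The natural family to use is $\{U_\alpha\}_{\alpha\in A}$ itself, read inside $\beta X$, and the witnessing data will come from the extension $F=\beta f\in C(\beta X)$ of the function $f$ of condition (iii) (which exists since $0\le f\le1$).

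First I would transfer the sets to $\beta X$. Each $f_\alpha$ is bounded, so it extends to $\beta f_\alpha\in C(\beta X)$, and each $K_\alpha=\cl_X U_\alpha$ is compact by (ii), hence closed in $\beta X$; thus $\cl_{\beta X}U_\alpha=K_\alpha\sbq X$. The key computation here is that $\coz_{\beta X}(\beta f_\alpha)=U_\alpha$: indeed $\beta f_\alpha$ vanishes on $X\setminus U_\alpha$, hence on $\cl_{\beta X}(X\setminus U_\alpha)$, which by density of $X$ contains the open set $\beta X\setminus K_\alpha$; so the cozero set of $\beta f_\alpha$ is trapped inside $K_\alpha\sbq X$ and meets $X$ exactly in $U_\alpha$. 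Consequently each $U_\alpha$ is a non-empty cozero set of $\beta X$ with $\bd_{\beta X}U_\alpha=K_\alpha\setminus U_\alpha=\bd_X U_\alpha\sbq X$. In particular disjointness condition (i) carries over verbatim, so $\{U_\alpha\}$ is a legitimate test family for the B-property of $\beta X$.

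Next I would locate the boundary point. Put $F=\beta f$ and $Z=F\inv(\{1\})$, a compact (closed) subset of $\beta X$ with $Z\cap X=f\inv(\{1\})=K$. If $Z$ were contained in $X$ it would equal the non-compact set $K$, contradicting (iv); so I may pick $z\in Z\setminus X$. Since $z\notin X$ and every $U_\alpha\sbq K_\alpha\sbq X$, the point $z$ lies in no $U_\alpha$, so $z\notin U:=\bigcup_\alpha U_\alpha$, while $U$ is open in $\beta X$. Because $F(z)=1$, $F$ is continuous with $F|_X=f$, and $X$ is dense, $z$ lies in $\cl_{\beta X}\{x\in X:f(x)>0\}=\cl_{\beta X}U$; together with $z\notin U$ this gives $z\in\bd_{\beta X}U$.

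Finally I would exhibit the bad neighbourhood. The crucial observation is that $f$ vanishes on every boundary $\bd_X U_\beta$: a point of $\bd U_\beta$ lies outside $U_\beta$ and, by (i), outside every $U_\alpha$ with $\alpha\ne\beta$, hence outside $U$, so $f=0$ there by (iii). As $\bd_{\beta X}U_\beta\sbq X$, this says $F\equiv 0$ on $\bigcup_\beta \bd_{\beta X}U_\beta$. Thus $N=\{y\in\beta X: F(y)>1/2\}$ is an open neighbourhood of $z$ (since $F(z)=1$) disjoint from every $\bd_{\beta X}U_\beta$, which is precisely the failure of the B-property. I expect the main obstacle to be the transfer step, namely verifying $\coz_{\beta X}(\beta f_\alpha)=U_\alpha$ and $\cl_{\beta X}U_\alpha=K_\alpha$; once the sets and their boundaries are shown to sit unchanged inside $X$, the extraction of $z$ (compact $Z$ versus non-compact $K$) and the construction of $N$ from $F$ are routine.
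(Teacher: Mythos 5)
Your proof is correct and follows essentially the same route as the paper's: transfer each $U_\alpha$ to a cozero set of $\beta X$ with unchanged boundary (using compactness of $K_\alpha$), extract a point of $\beta X\setminus X$ where the extension $\beta f$ equals $1$ (using non-compactness of $K$), and use a set on which $\beta f$ is bounded away from zero as the neighbourhood missing every $\bd U_\beta$. The only cosmetic difference is that you compute $\coz_{\beta X}(\beta f_\alpha)=U_\alpha$ directly from the vanishing of $\beta f_\alpha$ on $\cl_{\beta X}(X\setminus U_\alpha)$, whereas the paper multiplies $\beta f_\alpha$ by a function separating $K_\alpha$ from $\beta X\setminus X$; both steps serve the identical purpose.
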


\begin{proof} The condition (ii) says that $K_\alpha$ is compact and so it and $X^*= \beta X \setminus X$ are completely separated. There is $u_\alpha\in \C(\beta X)$ such that $u_\alpha \res_{K_\alpha}$ is constantly 1 and $u_\alpha\res_{X^*}$ is constantly 0.  Now $f_\alpha$ can be extended to $\beta f_\alpha\in \C(\beta X)$.  The product $u_\alpha \cdot \beta f_\alpha$ coincides with $f_\alpha$ on $U_\alpha$ and is 0 elsewhere. This shows that $U_\alpha$ is a cozero set in $\beta X$. Moreover, $\bd_XU_\alpha = \bd_{\beta X}U_\alpha$ because $K_\alpha$ is compact and, hence, also closed in $\beta X$. 

It follows that $\{U_\alpha\}_A$ is a family of cozero sets in $\beta X$ which satisfies the condition to test for the B-property. 

Since $K$ is closed and not compact in $X$, $P= (\cl_{\beta X} K)\cap X^*\ne \vide$.  Now, extend $f$ to $\beta f$. 

It follows that $(\coz \beta f)\cap X = U$ but also $P\sbq \coz \beta f$, since for $p\in P$, $\beta f(p) =1$.

Notice that any $p\in P$ is in $\bd_{\beta X}U$ since any neighbourhood $N$ of $p$ with $N\sbq \coz \beta f$ will meet $X$ and thus $ N\cap X\sbq U$. However, any such $N$ will not meet any $\bd_X U_\alpha = \bd_{\beta X}U_\alpha$, contradicting the B-property. 
\end{proof}

\begin{cor} \label{beta-rr}  Suppose that $X$ satisfies the conditions of Proposition~\ref{beta} and that $X$ is \rr-good.  Then, $\beta X$ is \rr-good and does not have the B-property. Moreover, $\beta X$ is not locally connected.  \end{cor}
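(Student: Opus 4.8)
The plan is to assemble the three assertions from results already established, since this corollary is essentially a synthesis. The first claim, that $\beta X$ is \rr-good, is immediate: by hypothesis $X$ is \rr-good, and Proposition~\ref{recall}(2) states that $X$ is \rr-good if and only if $\beta X$ is \rr-good. So I would dispose of this in one line.

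For the second claim, that $\beta X$ does not have the B-property, I would simply invoke Proposition~\ref{beta}. The hypotheses of the corollary explicitly state that $X$ satisfies conditions (i)--(iv) of that proposition, and its conclusion is precisely that $\beta X$ fails the B-property. So this too is a direct citation with no new work required.

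The only part demanding a genuine (if short) argument is the third claim, that $\beta X$ is not locally connected. Here I would argue by contrapositive, using the fact recorded in the opening of Section~5 that the B-property is implied by local connectedness. Since I have just shown in the second claim that $\beta X$ lacks the B-property, $\beta X$ cannot be locally connected. This is the key logical link: local connectedness is a stronger separation-type condition that would force the boundary-meeting behaviour demanded by Definition~\ref{b-prop}, and the whole point of Proposition~\ref{beta} was to engineer a cozero family whose union has a boundary point (coming from $P = (\cl_{\beta X}K)\cap X^*$) no neighbourhood of which meets any $\bd_{\beta X}U_\alpha$.

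I do not anticipate a real obstacle, as each piece is handed to me by a prior result; the main thing to get right is the direction of the implication in the third part (local connectedness $\Rightarrow$ B-property, hence no B-property $\Rightarrow$ not locally connected) and to make explicit that the hypotheses of the corollary are exactly the hypotheses needed to trigger Proposition~\ref{beta}. One could also remark, for the reader's benefit, that this construction is what makes the corollary interesting: it manufactures \rr-good spaces ($\beta X$) that are neither basically disconnected (they fail the B-property yet are connected in the examples to follow) nor locally connected, thereby separating the B-property from both of the main sufficient conditions for \rr-goodness identified earlier in Examples~\ref{basicexs}.
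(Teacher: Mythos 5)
Your proof is correct and follows exactly the paper's own argument: cite Proposition~\ref{recall}(2) for \rr-goodness of $\beta X$, cite Proposition~\ref{beta} for failure of the B-property, and conclude non-local-connectedness by contrapositive from the fact that local connectedness implies the B-property. The direction of that last implication is handled correctly, and no further detail is needed.
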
 

\begin{proof} Since $X$ is \rr-good, so is $\beta X$.  Then Proposition~\ref{beta} says that $\beta X$ does not have the B-property. If $\beta X$ were locally connected, it would have the B-property.  \end{proof}

\begin{ex}\label{b-reals}  The connected space $\beta\R$ is \rr-good and does not have the B-property.  \end{ex}

\begin{proof} The space $\R$ is \rr-good.  The cozero sets needed in the proposition can be taken to be the intervals $\{(n,n+1)\}_{n\in \Z}$ and, hence, Corollary~\ref{beta-rr} applies.  \end{proof}

Similarly, for any euclidean space $\R^n$, $\beta\R^n$ is a connected \rr-good space which does not have the B-property.  \\[-.5ex]


\begin{thebibliography}{AAA} 


\bibitem[AE]{AE} H. Al-Ezeh, \emph{Exchange PF-rings and almost PP-rings}, Internat.\  J.\  Math.\ Sci.\ \tbf{12} (1989), 725--728.

\bibitem[B]{B} W.D. Burgess, \emph{Abian's order relation on $\cx$}, Kyungpook Math.\ J.\ \tbf{15} (1975) , 99--104.

\bibitem[BR1]{BR1} W.D. Burgess and R. Raphael, \emph{The reduced ring order and lower semi-lattices}, Contemp.\ Math. \tbf{715} (2018), 
89--106.

\bibitem[BR2]{BR2} W.D. Burgess and R. Raphael, \emph{The reduced ring order and lower semi-lattices II}, 
under review, 2020.

\bibitem[Ch]{Ch} M. Chacron, \emph{Direct products of division rings and a paper of Abian}, Proc.\ Amer.\ Math.\ Soc.\ \tbf{29} (1971), 259--262.

 \bibitem[CHN]{CHN} W.W. Comfort, N. Hindman and S. Negrepontis, \emph{$F'$-spaces and their product with $P$-spaces}, Pacific J. Math.\ \tbf{28} (1969), 489--502.


\bibitem[Cu]{Cu} P.C. Curtis Jr, \emph{A note concerning certain product spaces}, Arch.\ Math.\ \tbf{11} (1960), 50--52.
 
\bibitem[GH]{GH} L. Gillman and M. Henriksen, \emph{Rings of continuous functions in which every finitely generated ideal is principal}, Trans.\ Amer.\ Math.\ Soc.\ \tbf{82} (1956) 366--391.

\bibitem[GJ]{GJ} L. Gillman and M. Jerison, Rings of Continuous Functions. Graduate Texts in Mathematics \tbf{43}, Springer, 1976.


\bibitem[LR]{LR} R. Levy and M.D. Rice, \emph{Normal $P$-spaces and the $G_\delta$\emph{-topology}}, Colloq.\ Math.\ \tbf{44} (1981), 227--240.


\bibitem[N]{N} S. Negrepontis, \emph{On the product of $F$-spaces}, Trans.\ Amer.\ Math.\ Soc.\ \tbf{136} (1969), 339--346.

\bibitem[NR]{NR} S.B. Niefield and K.I. Rosenthal, \emph{Sheaves of integral domains on Stone spaces}, J.\ Pure\ Appl.\ Algebra\ \tbf{47} (1987), 173--179.


 \bibitem[S]{S} I. Sussman, \emph{A generalization of boolean rings}, Math.\ Ann.\ \tbf{136} (1958), 326--338.


\bibitem[W]{W} R.C. Walker, The Stone-\v{C}ech Compactification. Springer Verlag, Ergebnisse der Mathematik \tbf{83}, 1974.

\end{thebibliography}
\end{document}